\newtheorem{theorem}{Theorem}[section]
\newtheorem{corollary}[theorem]{Corollary}
\newtheorem{sublemma}{Lemma}[theorem]
\newtheorem{lemma}[theorem]{Lemma}
\newtheorem{proposition}[theorem]{Proposition}
\newtheorem*{theo}{Theorem}
\newtheorem*{pro}{Proposition}
\theoremstyle{definition}
\newtheorem{definition}[theorem]{Definition}
\newtheorem{question}[theorem]{Question}
\newcommand{\tmop}[1]{\ensuremath{\operatorname{#1}}}
\newcommand{\x}{\mathfrak{X}}
\newcommand{\la}{\langle}
\newcommand{\ra}{\rangle}
\newcommand{\sa}{\mathcal A}
\newcommand{\arr}[1]{\overset{\to}{#1}}
\newcommand{\ahat}[2]{\widetilde{\mathcal M}_{#1}^{#2}}
\newcommand{\ah}[2]{\mathcal M_{#1}^{#2}}
\newcommand{\her}[1]{H_{{#1}^+}}
\newcommand{\map}[3]{f_{#1#2}^{#3}}
\newcommand{\ult}[2]{U_{#1}^{#2}}
\newcommand{\ahh}[1]{\mathcal M_{#1}}
\newcommand{\ma}[2]{f_{#1#2}}
\newcommand{\lset}[1]{\langle L_{#1}[A],A\rangle}
\newcommand{\lsetr}[1]{\langle L_{#1}[A\cap #1],A\cap #1\rangle}
\newcommand{\lsetex}[1]{\langle L_{#1}[A],A,\xi\rangle}
\newcommand{\lone}[1]{L_{#1}[A]}
\newcommand{\Power}{\mathcal P}
\newcommand{\diagonal}{\bigtriangleup}
\newcommand{\restrict}{\upharpoonright}
\newcommand{\mhat}[2]{\widetilde{M}_{#1}^{#2}}
\newcommand{\mh}[2]{M_{#1}^{#2}}
\newcommand{\mhh}[1]{M_{#1}}
\begin{document}
\author{Victoria Gitman}
\today
\address{New York City College of Technology (CUNY), 300 Jay Street,
Brooklyn, NY 11201 USA} \email{vgitman@nylogic.org}
\title{Ramsey-like Cardinals}
\maketitle
\begin{abstract}
One of the numerous characterizations of a Ramsey cardinal $\kappa$
involves the existence of certain types of elementary embeddings for
transitive sets of size $\kappa$ satisfying a large fragment of {\rm
ZFC}. We introduce new large cardinal axioms generalizing the Ramsey
elementary embeddings characterization and show that they form a
natural hierarchy between weakly compact cardinals and measurable
cardinals. These new axioms serve to further our knowledge about the
elementary embedding properties of smaller large cardinals, in
particular those still consistent with $V=L$.
\end{abstract}
\section{Introduction}
Most large cardinals, including measurable cardinals and stronger
notions, are defined in terms of the existence of elementary
embeddings with that cardinal as the critical point. Several smaller
large cardinals, such as weakly compact, indescribable, and Ramsey
cardinals, have definitions in terms of elementary embeddings, but
are more widely known for their other properties (combinatorial,
reflecting, etc.). Other smaller large cardinals, such as ineffable
and subtle cardinals, have no known elementary embedding
characterizations. We will investigate the elementary embedding
characterization of Ramsey cardinals and introduce new large
cardinal axioms by generalizing the Ramsey embeddings. By placing
these new large cardinals within the existing hierarchy, we try to
shed light on the variety of elementary embedding properties that
are possible for smaller large cardinals. In a forthcoming paper
with Thomas Johnstone \cite{gitman:ramseyindes}, we use the new
large cardinals to obtain some basic indestructibility results for
Ramsey cardinals through the techniques of lifting embeddings. It is
hoped that this project will motivate set theorists who work with
smaller large cardinals to focus on investigating their elementary
embedding properties.

Smaller large cardinals usually imply the existence of
embeddings\footnote{Throughout the paper, unless specifically stated
otherwise, the sources and targets of elementary embeddings are
assumed to be \emph{transitive} sets or classes.} for
``mini-universes" of set theory of size $\kappa$ and height above
$\kappa$ that we call \emph{weak $\kappa$-models} and
\emph{$\kappa$-models} of set theory. Let ${\rm ZFC}^-$ denote {\rm
ZFC} without the Powerset axiom. A transitive set $M\models {\rm
ZFC}^-$ of size $\kappa$ with $\kappa\in M$  is a \emph{weak
$\kappa$-model} of set theory. A weak $\kappa$-model $M$ is a
\emph{$\kappa$-model} if additionally $M^{<\kappa}\subseteq M$.
Observe that for any cardinal $\kappa$, if $M\prec\her{\kappa}$ has
size $\kappa$ with $\kappa\subseteq M$, then $M$ is a weak
$\kappa$-model. Similarly, for regular $\lambda
>\kappa$, if $X\prec H_\lambda$ has size $\kappa$ with
$\kappa+1\subseteq X$, then the Mostowski collapse of $X$ is a weak
$\kappa$-model. So there are always many weak $\kappa$-models for
any cardinal $\kappa$. If additionally $\kappa^{<\kappa}=\kappa$, we
can use a L\"owenheim-Skolem type construction to build
$\kappa$-models $M\prec \her{\kappa}$ and substructures $X\prec
H_\lambda$ whose collapse will be a $\kappa$-model.

To provide a motivation for the new large cardinal notions we are
introducing, let us recall the various equivalent characterizations
of weakly compact cardinals.\footnote{The proofs of these
equivalences are standard and parts of them can be found in
\cite{kanamori:higher} (Ch 1, Sec. 4 and Ch. 2, Sec 7.) and
\cite{cummings:weaklycompact} (Sec. 16).}
\begin{theorem}\label{th:wc}
If $\kappa^{<\kappa}=\kappa$, then the following are equivalent:
\begin{itemize}
\item[(1)] \emph{(Compactness Property)} $\kappa$ is weakly compact. That is, $\kappa$ is
uncountable and every $<\kappa$-satisfiable theory in a
$L_{\kappa,\kappa}$ language of size at most $\kappa$ is
satisfiable.
\item[(2)] \emph{(Extension Property)} For every $A\subseteq\kappa$,
there is a transitive structure $W$ properly extending $V_\kappa$
and $A^*\subseteq W$ such that $\la V_\kappa, \in,A\ra\prec \la
W,\in, A^*\ra$.
\item[(3)] \emph{(Tree Property)} $\kappa$ is inaccessible and every $\kappa$-tree has a cofinal branch.
\item[(4)] \emph{(Embedding Property)} Every $A\subseteq\kappa$ is contained in weak
$\kappa$-model $M$ for which there exists an elementary embedding
$j:M\to N$ with critical point $\kappa$.
\item[(5)] Every $A\subseteq\kappa$ is contained in a $\kappa$-model
$M$ for which there exists an elementary embedding $j:M\to N$ with
critical point $\kappa$.
\item[(6)] Every $A\subseteq\kappa$ is contained in a $\kappa$-model $M\prec
\her{\kappa}$ for which there exists an elementary embedding $j:M\to
N$ with critical point $\kappa$.
\item[(7)] For every $\kappa$-model $M$, there exists an elementary embedding
$j:M\to N$ with critical point $\kappa$.
\item[(8)] \emph{(Hauser Property)} For every $\kappa$-model $M$, there exists an elementary embedding
$j:M\to N$ with critical point $\kappa$ such that $j$ and $M$ are
elements of $N$.
\end{itemize}
\end{theorem}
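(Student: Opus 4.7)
The plan is to take the equivalence of the classical clauses (1), (2), (3) as known (see, e.g., Kanamori's \emph{The Higher Infinite}) and fit the five embedding characterizations (4)--(8) into the cycle. I begin with the trivial implications among the embedding clauses: (8)$\Rightarrow$(7) just forgets the extra requirement; (6)$\Rightarrow$(5)$\Rightarrow$(4) are immediate by weakening the demands on $M$; and (7)$\Rightarrow$(6) uses $\kappa^{<\kappa}=\kappa$ in a standard Skolem--L\"owenheim construction that, given $A\subseteq\kappa$, produces a $\kappa$-model $M\prec\her{\kappa}$ with $A\in M$, to which (7) then applies. A similar construction yields (7)$\Rightarrow$(5).

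Next I would close the cycle with (4)$\Rightarrow$(2). A preliminary step verifies that (4) forces $\kappa$ to be inaccessible: regularity is seen by placing a putative cofinal function $f:\lambda\to\kappa$ (for $\lambda<\kappa$) inside a weak $\kappa$-model $M$ supplied by (4); since $\mathrm{crit}(j)=\kappa>\lambda$, the image $j(f)$ equals $j\circ f$ and hence has range contained in $\kappa$, yet by elementarity $j(f)$ should be cofinal in $j(\kappa)$ inside $N$, a contradiction. The strong-limit clause follows by a parallel argument applied to a witness of $2^\lambda\ge\kappa$. With $\kappa$ inaccessible, $|V_\kappa|=\kappa$, so given $A\subseteq\kappa$ one invokes (4) on a subset of $\kappa$ coding both $V_\kappa$ and $A$, obtaining a weak $\kappa$-model $M$ with $V_\kappa,A\in M$ together with $j:M\to N$. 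Then $W:=j(V_\kappa)$ is transitive and properly extends $V_\kappa$ (it contains $\kappa$), $A^*:=j(A)$ satisfies $A^*\cap V_\kappa=A$ since $j$ fixes $V_\kappa$ pointwise, and elementarity of $j$ applied to the definable satisfaction predicate for $\langle V_\kappa,\in,A\rangle$ inside $M$ yields $\langle V_\kappa,\in,A\rangle\prec\langle W,\in,A^*\rangle$, establishing (2).

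The main obstacle is (3)$\Rightarrow$(8): starting from weak compactness alone one must, for every $\kappa$-model $M$, manufacture an embedding $j:M\to N$ with critical point $\kappa$ such that \emph{both} $j$ and $M$ lie in $N$. My approach is to extract an $M$-ultrafilter by means of the tree property. Code $M$ as a subset of $\kappa$, fix an enumeration of $\mathcal{P}(\kappa)\cap M$, and build a $\kappa$-tree $T$ whose level-$\alpha$ nodes are coherent partial $M$-normal, $M$-$\kappa$-complete ultrafilter approximations deciding the first $\alpha$ sets of the enumeration. Each level is non-empty and of size $<\kappa$, using $M^{<\kappa}\subseteq M$ to take diagonal intersections of $<\kappa$-sized families from $M$. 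The tree property supplied by (3) delivers a cofinal branch through $T$, which assembles into an $M$-ultrafilter $U$; countable completeness of $U$ in $V$ (again from $M^{<\kappa}\subseteq M$) guarantees that $\mathrm{Ult}(M,U)$ is well-founded, yielding a transitive $j:M\to N$ with critical point $\kappa$. The final and most delicate ingredient is to arrange, by careful bookkeeping inside the tree construction, that $U$ is \emph{weakly amenable}, i.e., $U\cap x\in M$ for every $x\in M$ of $M$-cardinality $\kappa$; this places $U$ inside $N$ (as $j(U\cap x)$ for appropriate $x$), whence $M$ and $j$ are definable, hence present, in $N$. Achieving weak amenability while preserving normality and $\kappa$-completeness along the branch is where the real work lies.
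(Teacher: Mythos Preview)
The paper does not prove Theorem~\ref{th:wc}; it simply cites Hamkins' book. So there is no ``paper's proof'' to compare against, but your proposal contains a genuine error worth flagging, precisely because it collides with the main theme of the paper.

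Your approach to $(3)\Rightarrow(8)$ is fatally flawed. You propose to arrange that the $M$-ultrafilter $U$ is \emph{weakly amenable}. But by Proposition~\ref{p:wa}, a weakly amenable $M$-ultrafilter yields a $\kappa$-powerset preserving embedding. If you could produce such an ultrafilter for every $\kappa$-model $M$ from the tree property alone, you would have shown that every weakly compact cardinal is strongly Ramsey (Definition~\ref{def:srp}). The entire point of this paper is that this is false by a wide margin: strongly Ramsey cardinals are Ramsey limits of Ramsey cardinals (Corollary~\ref{cor:strongram}), hence far above weakly compact. So the ``careful bookkeeping'' you allude to cannot succeed. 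Moreover, your claim that weak amenability ``places $U$ inside $N$'' is itself problematic: if $U\in N$ and $\Power(\kappa)^M=\Power(\kappa)^N$, then $N$ sees a normal measure on $\kappa$, forcing $\kappa$ to be a limit of measurables (compare Proposition~\ref{prop:hauser}). The correct route to $(8)$ is the Hauser-style argument: one enlarges $M$ to a $\kappa$-model $M_1$ with $M\in M_1$, applies $(7)$ to $M_1$, and extracts the desired embedding from $j_1\upharpoonright M$; weak amenability plays no role.

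A secondary issue: your ``parallel argument'' that $(4)$ alone yields the strong-limit property is exactly what the paper warns against in the proof of Proposition~\ref{prop:limwc}, where it notes that property $(4)$ \emph{provably does not} imply $\kappa$ is a strong limit. Within the theorem you may freely use the hypothesis $\kappa^{<\kappa}=\kappa$, but the embedding argument you sketch for strong-limit does not work: if $2^\lambda\geq\kappa$, the image $j(g)$ of an injection $g:\kappa\to\Power(\lambda)$ lands in $\Power(\lambda)^N$, which may well be larger than $\Power(\lambda)^M$, so no contradiction arises.
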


Call an elementary embedding $j:M\to N$ between transitive models of
$\rm{ZFC}^-$ $\kappa$-\emph{powerset preserving} if it has critical
point $\kappa$ and $M$ and $N$ have the \emph{same} subsets of
$\kappa$. Note that if $j:M\to N$ has critical point $\kappa$, then
$\Power(\kappa)^M\subseteq \Power(\kappa)^N$, and so for such an
embedding to be $\kappa$-powerset preserving, $N$ must not acquire
additional subsets of $\kappa$. For example, it is trivially true
that any elementary embedding $j:V\to M$ with critical point
$\kappa$ is $\kappa$-powerset preserving since $M\subseteq V$. Many
common set theoretic constructions involve building a directed
system of elementary embeddings by iterating the ultrapower
construction starting from a measure. The existence of the measure
required for the second step of this iteration is equivalent to the
ultrapower embedding by the initial measure being $\kappa$-powerset
preserving. This makes $\kappa$-powerset preservation a necessary
precondition for iterating the ultrapower construction (see Sections
\ref{sec:prelim} and \ref{sec:ramsey}), and thus a natural notion to
study. Another important motivation for focusing on
$\kappa$-powerset preserving embeddings comes from a general trend
in the theory of large cardinals of making the source and target of
the embedding closely related to derive various reflection
properties.

The general idea is to consider the elementary embedding
characterizations of weakly compact cardinals from Theorem
\ref{th:wc} (4)-(7) with the added assumption that the embeddings
have to be \emph{$\kappa$-powerset preserving}. We will soon see
that this innocuous looking assumption destroys the equivalence in
the strongest possible sense. We are now ready to introduce the new
\emph{Ramsey-like} large cardinal notions: \emph{weakly Ramsey}
cardinals, \emph{strongly Ramsey} cardinals, \emph{super Ramsey}
cardinals, and \emph{superlatively Ramsey} cardinals. These and
related large cardinal notions will be the subject of this paper.
\begin{definition}\label{def:wrp}
A cardinal $\kappa$ is \emph{weakly Ramsey} if every
$A\subseteq\kappa$ is contained in a weak $\kappa$-model $M$ for
which there exists a $\kappa$-powerset preserving elementary
embedding $j:M\to N$.
\end{definition}
Recall that a cardinal $\kappa$ is \emph{Ramsey} if every coloring
$f:[\kappa]^{<\omega}\to 2$ has a homogeneous set of size $\kappa$.
The connection between the new notions and Ramsey cardinals is seen
in the following theorem implicit in \cite{dodd:coremodel} and
\cite{mitchell:ramsey}.
\begin{theorem}
\label{th:ramseycard} A cardinal $\kappa$ is Ramsey if and only if
every $A\subseteq\kappa$ is contained in a weak $\kappa$-model $M$
for which there exists a $\kappa$-powerset preserving elementary
embedding $j:M\to N$ with the additional property that whenever $\la
A_n\mid n\in \omega\ra$ is a sequence of subsets of $\kappa$ such
that each $A_n\in M$ and $\kappa\in j(A_n)$, then $\cap_{n\in\omega}
A_n \neq \emptyset$.
\end{theorem}
 For $\la A_n\mid n\in\omega\ra\in M$, the conclusion
follows trivially by elementarity, so the content here is for
sequences \emph{not} in M. A proof of Theorem \ref{th:ramseycard}
will be sketched in Sections \ref{sec:hierarchy} and
\ref{sec:ramsey}. We will see later that the measure derived from an
embedding of Theorem \ref{th:ramseycard} allows the ultrapower
construction to be iterated through all the ordinals (see Section
\ref{sec:iterable}). Thus, there is an obvious gap between weakly
Ramsey cardinals, where we can only begin iterating by constructing
the second measure, and Ramsey cardinals, where we can already
iterate the construction through all the ordinals. This gap will be
filled by a refined hierarchy of new large cardinal notions which
are introduced in Section \ref{sec:iterable} and form the subject of
\cite{gitman:welch}.
\begin{definition}\label{def:srp}
A cardinal $\kappa$ is \emph{strongly Ramsey} if every
$A\subseteq\kappa$ is contained in a $\kappa$-model $M$ for which
there exists a $\kappa$-powerset preserving elementary embedding
$j:M\to N$.
\end{definition}
A motivation for introducing strongly Ramsey cardinals lies in the
techniques used to demonstrate the indestructibility of large
cardinals by certain kinds of forcing. Having $M^{<\kappa}\subseteq
M$ makes it possible to use the standard techniques of lifting the
embedding to a forcing extension. Note that strongly Ramsey
cardinals are clearly Ramsey since every $\la A_n\mid
n\in\omega\ra\subseteq M$ is an element of $M$.
\begin{definition}\label{def:strongrp}
A cardinal $\kappa$ is \emph{super Ramsey} if every
$A\subseteq\kappa$ is contained in a $\kappa$-model $M\prec
\her{\kappa}$ for which there exists a $\kappa$-powerset preserving
elementary embedding $j:M\to N$.
\end{definition}
A motivation for introducing super Ramsey cardinals is that having
$M\prec \her{\kappa}$ guarantees that $M$ is stationarily correct.
\begin{definition}\label{def:trp}
A cardinal $\kappa$ is \emph{superlatively Ramsey} if for every
$\kappa$-model\break $M\prec \her{\kappa}$, there exists a
$\kappa$-powerset preserving elementary embedding $j:M\to N$.
\end{definition}

Since a weak $\kappa$-model can take the Mostowski collapse of any
of its elements, Definitions \ref{def:wrp}, \ref{def:srp},
\ref{def:strongrp}, \ref{def:trp} and Theorem \ref{th:ramseycard}
hold not just for any $A\subseteq\kappa$, but more generally for any
$A\in\her{\kappa}$.

The following theorem summarizes what is known about where the new
large cardinals fit into the existing hierarchy. Also, see the
diagram that follows.
\begin{theorem}\label{th:main}
$\,$
\begin{itemize}
\item[(1)] A measurable cardinal is a super Ramsey limit of super Ramsey cardinals.
\item[(2)] A super Ramsey cardinal is a strongly Ramsey limit of strongly
Ramsey cardinals.
\item[(3)] A strongly Ramsey cardinal is a limit of completely Ramsey cardinals. It is
Ramsey but not necessarily completely
Ramsey.\footnote{\emph{Completely Ramsey} cardinals top a hierarchy
of large cardinals generalizing Ramsey cardinals and were introduced
in \cite{feng:ramsey}.}
\item[(4)] A Ramsey cardinal is a weakly Ramsey limit of weakly
Ramsey cardinals.
\item[(5)] Weakly Ramsey cardinals are consistent with
$V=L$.
\item[(6)] A weakly Ramsey cardinal is a weakly ineffable
limit of completely ineffable cardinals.\footnote{\emph{Completely
ineffable} cardinals were introduced in \cite{kleinberg:ineffable}.}
\item[(7)] There are no superlatively Ramsey cardinals.
\end{itemize}
\end{theorem}
The proofs of all statements excluding (4) and (5) are given in
Section \ref{sec:hierarchy}. Statements (4) and (5) are proved in
the upcoming paper \cite{gitman:welch}. Theorem \ref{th:main} shows,
surprisingly, that the various embedding characterizations of weakly
compact cardinals (Theorem \ref{th:wc}) form a hierarchy of strength
when the powerset preservation property is added. The most general
such property is even inconsistent! This hierarchy fits quite
naturally into the large cardinal hierarchy and suggests further
refinements such as those introduced in Section \ref{sec:iterable}.

The diagram on the next page illustrates how the new notions fit
into the existing hierarchy. The solid arrows indicate direct
implications and the dashed arrows indicate consistency strength.
\newpage

\begin{diagram}[height=1.7em]
&  &                 &          &                &\text{Superlatively Ramsey}&\lCorresponds             &\text{0=1}&                          \\
&  &                 &          &                &                        &\dTo                      &         &                          \\
&  &                 &          &                &                        &\text{Strong}          &         &                          \\
&  &                 &          &                &\ldTo(4,14)             &\dTo                      &         &                          \\
&  &                 &          &                &                        &\text{Measurable}      &         &                          \\
&  &                 &          &                &                        &\dTo                      &         &                          \\
&  &                 &          &                &                        &\text{Super Ramsey}    &         &                          \\
&  &                 &          &                &                        &\dTo                      &    &                          \\
&  &                 &          &                &                        &\text{Strongly Ramsey} &        & \\
&  &                 &          &                &                        &                       & \rdDashto                &    \\
&  &                 &                 &          &                &                        &                       & \text{Completely Ramsey}    \\
&  &                 &          &                &                        &\dTo                      &\ldTo         &  \dTo                   \\
&  &                 &          &                &                        &\text{Ramsey}          &         &                          \\
&  &                 &          &                & \ldTo                  &                       &         &           \\
&  &                 &           &\text{$\omega_1$-Erd\H{o}s}&   &                       &           &\\
&  &                 &          &                & \rdDashto              &\dTo                      &         &                          \\
&  &                 &          &                &                        &\text{$\omega_1$-iterable} &         &                          \\
&\ldTo(2,15)&        &          &                &\ldTo(4,15)             &\dTo                                      &         &        \\
&  &                 &          &                &                        &\text{$0^\#$ exists}                     &                  &                          \\
&   &        &          &                &\rDotsto^{V=L}                                &\dTo                                    &\lDotsto^{V=L}        &        \\
&  &                 &          &                &                        &\text{$\alpha$-iterable}                     &                  &                          \\
&  &                 &          &                &                        & \dTo                   &                  &                          \\
&  &                 &          &                &                        &\text{Weakly Ramsey/1-iterable}   &         &               \\
&  &                 &          &                &                        &\dTo                      &\rdDashto&                          \\
&  &                 &          &                &                        &                          &         &\text{Completely Ineffable}       \\
&  &                 &          &                &                        &                          &         &\dTo                          \\
&  &                 &          &                &                        &                         &         & \text{Ineffable}               \\
&  &                 &          &                &                        &                         &  \ldTo        &                        \\
&  &                 &          &                &                        &\text{Weakly Ineffable}&         &                          \\
&  &                 &          &                &\ldTo                   &\dTo                      &         &                          \\
&  &                 &          &\text{Subtle}&                        &                          &         &                          \\
&  &                 &\ldDashto &                &                        &                          &         &                          \\
\text{Strongly Unfoldable}&\pile{\rTo\\\lDashto}&\text{Unfoldable}&  &    &  &                 &         &                          \\
\dTo&  &             &\rdTo(4,5)          &                &                        &                          &         &                          \\
\text{Totally Indescribable}&  &                 &          &                &         &                          &         &                          \\
&\rdTo(6,3)  &                 &          &                &                   &                          &         &                          \\
&  &                 &          &                &                        &                          &         &                          \\
&  &                 &          &                &                        &\text{Weakly Compact}  &         &                          \\
\end{diagram}
\newpage
\section{Ramsey-like embeddings and ultrafilters}\label{sec:prelim}
The existence of an elementary embedding $j:V\to M$ with critical
point $\kappa$ is equivalent to the existence of a $\kappa$-complete
ultrafilter\footnote{Throughout the paper, filters are assumed to be
nonprincipal.} on $\kappa$. Similarly the existence of such an
elementary embedding for a weak $\kappa$-model of set theory is
equivalent to the existence of a certain filter measuring all the
subsets of $\kappa$ of the model. In this section, we will recall
the various properties of such ``mini" ultrafilters and use them to
characterize the new Ramsey-like large cardinal notions.
\begin{definition}
Suppose $M$ is a transitive model of $\rm{ZFC}^-$ and $\kappa$ is a
cardinal in $M$. A set $U\subseteq \Power(\kappa)^M$ is an
$M$-\emph{ultrafilter} if $\la M,\in,U\ra\models ``U$ is a
$\kappa$-complete normal ultrafilter".
\end{definition}
Recall that an ultrafilter is $\kappa$-\emph{complete} if the
intersection of any $<\kappa$-sized collection of sets in the
ultrafilter is itself an element of the ultrafilter. An ultrafilter
is \emph{normal} if every function regressive on a set in the
ultrafilter is constant on a set in the ultrafilter, or equivalently
if it is closed under diagonal intersections of length $\kappa$. By
definition, $M$-ultrafilters are $\kappa$-complete and normal only
from the \emph{point of view} of $M$, that is, the collection of
sets being intersected or diagonally intersected has to be an
element of $M$. We will say that an $M$-ultrafilter is
\emph{countably complete} if every countable collection of sets in
the ultrafilter has a nonempty intersection. Obviously, any
$M$-ultrafilter is, by definition, countably complete from the point
of view of $M$, but countable completeness requires the property to
hold of \emph{all} sequences, not just those in $M$.\footnote{It is
more standard for countable completeness to mean
$\omega_1$-completeness which requires the intersection to be an
element of the ultrafilter. However, the weaker notion we use here
is better suited to $M$-ultrafilters because the countable
collection itself can be external to $M$, and so there is no reason
to suppose the intersection to be an element of $M$.} Unless $M$
satisfies some extra condition, such as being closed under countable
sequences, an $M$-ultrafilter need not be countably complete. A
modified version of the \L o\'{s} ultrapower construction using only
functions on $\kappa$ that are elements of $M$ can be carried out
with an $M$-ultrafilter. The ultrapower of $V$ by an ultrafilter on
$\kappa$ is well-founded if and only if the ultrafilter is countably
complete. For $M$-ultrafilters, there is no such nice
characterization of well-founded ultrapowers. While countable
completeness is sufficient, it is not necessary.\footnote{See
\cite{kanamori:higher} (Ch. 4, Sec. 19) for details on ultrapowers
by $M$-ultrafilters.}
\begin{definition}
Suppose $M$ is a weak $\kappa$-model. An $M$-ultrafilter $U$ on
$\kappa$ is $0$-\emph{good} if the ultrapower of $M$ by $U$ is
well-founded.
\end{definition}
The notion of 0-good $M$-ultrafilters anticipates the discussion in
Section \ref{sec:iterable}, where we introduce the generalized
notion of $\alpha$-\emph{good} $M$-ultrafilters. A 0-good
$M$-ultrafilter on $\kappa$ gives rise to an elementary embedding of
$M$ with critical point $\kappa$, that is, the ultrapower embedding.
The next proposition shows conversely that an elementary embedding
of $M$ with critical point $\kappa$ gives rise to a 0-good
$M$-ultrafilter on $\kappa$.
\begin{proposition}\label{prop:wlog} Suppose $M$ is a weak
$\kappa$-model and $j:M\to N$ is an elementary embedding with
critical point $\kappa$. If we let $X=\{j(f)(\kappa)\mid\
f:\kappa\to M\text{ and }f\in M\}$ and $\pi:X\to K$ be the Mostowski
collapse, then $h=\pi\circ j:M\to K$ is an elementary embedding with
critical point $\kappa$ having the following properties:
\begin{itemize}
\item[(1)] $h$ is the ultrapower map by an $M$-ultrafilter on
$\kappa$,
\item[(2)] the target $K$ has size $\kappa$,
\item[(3)] the following diagram is commutative:
\begin{diagram}
M & & & &\\
\dTo^h & \rdTo(4,2)^j & & &\\
K & &\rTo^{\pi^{-1}} & & N
\end{diagram}
\item[(4)] if $j$ was $\kappa$-powerset preserving, then so is $h$,
\item[(5)] if $M^\alpha\subseteq M$ for some ordinal $\alpha$, then $K^{\alpha}\subseteq K$,
\item[(6)] $\kappa\in j(A)$ if and only if $\kappa\in h(A)$ for all
$A\in \Power(\kappa)^M$.
\end{itemize}
\end{proposition}
\begin{proof}
Properties (2) and (3) follow directly from the definition of $h$.
Define $U=\{A\in \Power(\kappa)^M\mid\kappa\in j(A)\}$. It follows
by standard arguments that $U$ is an $M$-ultrafilter, the embedding
$h$ is the ultrapower  by $U$, and if $M^\alpha\subseteq M$, then
$K^\alpha\subseteq K$. This gives properties (1) and (5). For (4),
observe that $\pi(\beta)=\beta$ for all $\beta\leq \kappa$, and
hence the critical point of $\pi^{-1}$ is above $\kappa$. This
implies that $N$ and $K$ have the same subsets of $\kappa$. Finally,
for (6), we have
\begin{displaymath}
\kappa\in j(A)\leftrightarrow \pi(\kappa)\in \pi\circ
j(A)\leftrightarrow \kappa\in \pi\circ j(A)\leftrightarrow \kappa
\in h(A).
\end{displaymath}
\end{proof}
By Proposition \ref{prop:wlog}, we can assume, without loss of
generality, in Definitions \ref{def:wrp}, \ref{def:srp},
\ref{def:strongrp}, \ref{def:trp}, and Theorem \ref{th:ramseycard}
that $j:M\to N$ is the \emph{ultrapower map} by an $M$-ultrafilter
on $\kappa$, the target $N$ has size $\kappa$, and if
$M^\alpha\subseteq M$ for some ordinal $\alpha$, then
$N^\alpha\subseteq N$.

\begin{definition}\label{d:wa}
Suppose $M$ is a weak $\kappa$-model. An $M$-ultrafilter $U$ on
$\kappa$ is \emph{weakly amenable} if for every $A\in M$ of size
$\kappa$ in $M$, the intersection $U\cap A$ is an element of $M$.
\end{definition}
Equivalently, $U$ is weakly amenable if for every sequence $\la
B_\alpha\mid \alpha<\kappa\ra$ in $M$ of subsets of $\kappa$, the
set $\{\alpha<\kappa\mid B_\alpha\in U\}$ is an element of $M$.

It follows directly from the definition that if $U$ is weakly
amenable, then for every $A\subseteq\kappa^n\times\kappa$ in $M$,
the set $\{\arr{\alpha}\in \kappa^n\mid A_{\arr{\alpha}}\in U\}\in
M$.\footnote{Whenever $A\subseteq X\times Y$, we will use the
notation $A_a$ to denote the set $\{y\in Y\mid (a,y)\in A\}$. } This
allows us to define \emph{product} ultrafilters $U^n$ on
$\Power(\kappa^n)^ M$ for every $n\in\omega$. We define $U^n$ by
induction on $n$ by $A\subseteq\kappa^n\times\kappa$ is in
$U^{n+1}=U^n\times U$ if $A\in M$ and $\{\arr{\alpha}\in\kappa^n\mid
A_{\arr{\alpha}}\in U\}\in U^n$. Note that weak amenability is
clearly a prerequisite for defining product ultrafilters. The next
proposition is a standard fact about product ultrafilters that will
be used later.
\begin{proposition}\label{prop:diagonal}
Suppose $M$ is a weak $\kappa$-model and $U$ is a weakly amenable
$M$-ultrafilter on $\kappa$. Then for every  $A\in U^n$, there is
$B\in U$ such that if $\alpha_1<\ldots<\alpha_n$ are in $B$, then
$\la \alpha_1,\ldots,\alpha_n\ra\in A$.
\end{proposition}
\begin{proof}
We will argue by induction on $n\in\omega$. For the base case $n=2$,
fix $A\in U^2$. By definition of $U^2$, the set
$X=\{\alpha<\kappa\mid A_\alpha\in U\}\in U$. In $M$, define a
sequence $\la C_\alpha\mid \alpha<\kappa\ra$ of sets in $U$ by
$C_\alpha=A_\alpha$ for $\alpha\in X$ and $C_\alpha=\kappa$
otherwise. The diagonal intersection
$D=\diagonal_{\alpha<\kappa}C_\alpha$ is in $U$ by normality. We
claim that $B=D\cap X$ has the requisite property for $A$. If
$\alpha_1<\alpha_2\in B$, then $\alpha_1\in X$, and so $\alpha_2\in
C_{\alpha_1}=A_{\alpha_1}$. It follows that $\la
\alpha_1,\alpha_2\ra\in A$.

Now suppose inductively that the statement holds for $U^n$, and fix
$A\in U^{n+1}$. By definition of $U^{n+1}$, the set
$X=\{\arr{\alpha}\in\kappa^n\mid A_{\arr{\alpha}}\in U\}\in U^n$. By
the inductive assumption, fix $Y\in U$ such that for all
$\alpha_1<\cdots<\alpha_n$ in $Y$, the sequence $\la
\alpha_1,\ldots,\alpha_n\ra\in X$. It follows that for all
$\alpha_1<\cdots<\alpha_n$ in $Y$, the set
$A_{\la\alpha_1,\ldots,\alpha_n\ra}\in U$. In $M$, we define a
sequence $\la C_\alpha\mid \alpha<\kappa\ra$ of sets in $U$ as
follows. For $\alpha\in Y$, let $Seq^\alpha$ be the collection of
all increasing $n$-tuples of ordinals in $Y$ ending in $\alpha$.
Define $C_\alpha=\cap_{\arr{\beta}\in Seq^\alpha}A_{\arr{\beta}}$
for $\alpha\in Y$ and $C_\alpha=\kappa$ otherwise.  The diagonal
intersection $D=\diagonal_{\alpha<\kappa}C_\alpha$ is in $U$ by
normality. We claim that $B=D\cap Y$ has the requisite property for
$A$. If $\alpha_1<\cdots<\alpha_n<\alpha_{n+1}\in B$, then
$\alpha_1<\cdots<\alpha_n\in Y$ and $\alpha_{n+1}\in C_{\alpha_n}$.
Since $\alpha_n\in Y$, we have $C_{\alpha_n}=\cap_{\arr{\beta}\in
Seq^{\alpha_n}}A_{\arr{\beta}}$. So in particular, $\alpha_{n+1}\in
A_{\la \alpha_1,\ldots,\alpha_n\ra}$, and hence $\la
\alpha_1,\ldots,\alpha_n,\alpha_{n+1}\ra\in A$. This completes the
induction step and finishes the argument.
\end{proof}

The next proposition reformulates the property of $\kappa$-powerset
preservation in terms of the existence of weakly amenable
$M$-ultrafilters.
\begin{proposition}\label{p:wa}Suppose $M$ is a transitive model of $\rm{ZFC}^-$.
\begin{itemize}
\item[(1)] If $j:M\to N$ is the ultrapower by a weakly amenable
$M$-ultrafilter on $\kappa$, then $j$ is $\kappa$-powerset
preserving.
\item[(2)] If $j:M\to N$ is a $\kappa$-powerset preserving embedding,
then the $M$-ultrafilter $U=\{A\in\Power(\kappa)^M\mid \kappa\in
j(A)\}$ is weakly amenable. \end{itemize}
\end{proposition}
See \cite{kanamori:higher} (Ch. 4, Sec. 19) for proof.
\begin{definition}
Suppose $M$ is a weak $\kappa$-model. An $M$-ultrafilter on $\kappa$
is 1-\emph{good} if it is 0-good and weakly amenable.
\end{definition}
Now we are ready to characterize the Ramsey-like large cardinal
notions in terms of the existence of $M$-ultrafilters.
\begin{proposition}\label{prop:ultdef} $\,$\\
\begin{itemize}
\item [(1)] Suppose $\kappa^{<\kappa}=\kappa$, then $\kappa$ is weakly compact if and only if every
$A\subseteq\kappa$ is contained in a weak $\kappa$-model $M$ for
which there exists a $0$-good $M$-ultrafilter on $\kappa$.
\item[(2)] A cardinal $\kappa$ is weakly Ramsey if and only if every
$A\subseteq\kappa$ is contained a weak $\kappa$-model $M$ for which
there exists a $1$-good $M$-ultrafilter on $\kappa$.
\item[(3)] A cardinal $\kappa$ is Ramsey if and only if every
$A\subseteq\kappa$ is contained in a weak $\kappa$-model $M$ for
which there exists a weakly amenable countably complete
$M$-ultrafilter on $\kappa$.
\item[(4)] A cardinal $\kappa$ is strongly Ramsey if and only if every
$A\subseteq\kappa$ is contained in a $\kappa$-model $M$ for which
there exists a weakly amenable $M$-ultrafilter on $\kappa$.
\item[(5)] A cardinal $\kappa$ is super Ramsey if and only if every
$A\subseteq\kappa$ is contained in a $\kappa$-model $M\prec
\her{\kappa}$ for which there exists a weakly amenable
$M$-ultrafilter on $\kappa$.
\end{itemize}
\end{proposition}
\begin{proof}
Statement (1) follows from Theorem \ref{th:wc}(4) and Proposition
\ref{prop:wlog}. Statement (2) follows from Proposition
\ref{prop:wlog} and Proposition \ref{p:wa}. Statement (3) follows
Proposition \ref{prop:wlog} and the fact that countably complete
$M$-ultrafilters are 0-good. Statements (4) and (5) follow because
for a $\kappa$-model $M$, an $M$-ultrafilter is always countably
complete.
\end{proof}

We will say that a weak $\kappa$-model $M\models$``I am
$\her{\kappa}$" if $M$ thinks all its sets have transitive closure
of size at most $\kappa$. By replacing $M$ with $\bar{M}=\{a\in
M:M\models$ transitive closure of $a$ has size $\leq\kappa\}$, we
can assume, without loss of generality, that $M\models$``I am
$\her{\kappa}$" in every statement of Proposition \ref{prop:ultdef}.
The model $\bar{M}$ is a weak $\kappa$-model that will be as closed
under $<\kappa$-sequences as $M$ and the $M$-ultrafilter $U$ will
retain the necessary properties when viewed as an
$\bar{M}$-ultrafilter.
\section{The Large Cardinal Hierarchy}\label{sec:hierarchy}
In this section, we prove parts (1)-(3) and (6)-(7) of Theorem
\ref{th:main}. We also show the backward direction of Theorem
\ref{th:ramseycard} as restated in Proposition \ref{prop:ultdef}(3).
The arguments below rely mainly on the powerful reflecting
properties of $\kappa$-powerset preserving embeddings. It is also
important to note that if two transitive models of $\rm{ZFC}^-$ have
the same subsets of $\kappa$, then they also have the same sets with
transitive closure of size at most $\kappa$.
\begin{proposition}
Weakly Ramsey cardinals are inaccessible.
\end{proposition}
\begin{proof}
Suppose $\kappa$ is weakly Ramsey. If $\kappa$ is not regular, then
there is a cofinal map $f:\alpha\to \kappa$ for some
$\alpha<\kappa$. Choose a weak $\kappa$-model $M$ containing $f$ for
which there is an embedding $j:M\to N$ with critical point $\kappa$.
Now observe that $j(f)=f$ and $j(f)$ must be cofinal in $j(\kappa)$
by elementarity, which is a contradiction. If $\kappa$ is not a
strong limit, then there is $\alpha<\kappa$ with
$|\Power(\alpha)|\geq\kappa$. Fix
$f:\kappa\overset{\text{1-1}}{\to}\Power(\alpha)$ and choose a weak
$\kappa$-model $M$ containing $f$ for which there is a
$\kappa$-powerset preserving embedding $j:M\to N$. Let
$j(f)(\kappa)=A\subseteq\alpha$. The set $A$ is in $M$ by
$\kappa$-powerset preservation, and so $j(f)(\kappa)=A=j(A)$ since
$A\subseteq\alpha$. Thus, $N\models \exists
\xi<j(\kappa)\,j(f)(\xi)=j(A)$, and so by elementarity, $M\models
\exists\xi<\kappa\,f(\xi)=A$. Now we have $j(f)(\xi)=f(\xi)=A$ and
$j(f)(\kappa)=A$, which contradicts that $j(f)$ is 1-1 by
elementarity.
\end{proof}
Notice that $\kappa$-powerset preservation is not required to show
regularity. However, without $\kappa$-powerset preservation it would
be impossible to show that $\kappa$ is a strong limit since property
$(4)$ of weakly compact cardinals from Theorem \ref{th:wc} without
$\kappa^{<\kappa}=\kappa$ does not imply that $\kappa$ is a strong
limit. To show this we start with a weakly compact cardinal $\kappa$
and force to add $\kappa^+$ many Cohen reals. In the forcing
extension, $\kappa$ is clearly no longer a strong limit but it can
be shown that it retains property (4) from Theorem
\ref{th:wc}.\footnote{The argument is due to Hamkins and will appear
in \cite{hamkins:book} (Ch. 6).}

\begin{definition}
An uncountable regular cardinal $\kappa$ is \emph{weakly ineffable}
if for every sequence $\la A_\alpha\mid \alpha\in\kappa\ra$ with
$A_\alpha\subseteq \alpha$, there exists $A\subseteq\kappa$ such
that the set\break $S=\{\alpha\in\kappa\mid A\cap \alpha=A_\alpha\}$
has size $\kappa$. An uncountable regular cardinal $\kappa$ is
\emph{ineffable} if such $A$ can be found for which the
corresponding set $S$ is stationary.
\end{definition}
Ineffable cardinals are limits of weakly ineffable cardinals. This
follows since ineffable cardinals are $\Pi_2^1$-indescribable and
being weakly ineffable is a $\Pi_2^1$- statement satisfied by
ineffable cardinals. Ramsey cardinals are limits of ineffable
cardinals but need not be ineffable. Since being Ramsey is a
$\Pi_2^1$-statement, a Ramsey cardinal that is ineffable is a limit
of Ramsey cardinals. In particular, the least Ramsey cardinal cannot
be ineffable. Like Ramsey cardinals, ineffable cardinals can be
characterized by the existence of homogeneous sets for colorings. An
uncountable cardinal $\kappa$ is ineffable if and only if every
coloring $f:[\kappa]^2\to 2$ is homogeneous on a stationary
set.\footnote{For an introduction to ineffability see
\cite{devlin:constructibility} (Ch. 7, Sec. 2).}
\begin{theorem}\label{th:ineffable} Weakly Ramsey cardinals are
weakly ineffable.
\end{theorem}
\begin{proof}
Suppose $\kappa$ is weakly Ramsey. Fix $\overset{\to}{A}=\la
A_\alpha\mid \alpha\in\kappa\ra$ with each
$A_\alpha\subseteq\alpha$. Choose a weak $\kappa$-model $M$
containing $\overset{\to}{A}$ for which there exists a
$\kappa$-powerset preserving embedding $j:M\to N$ and let
$A=j(\overset{\to}{A})(\kappa)$. Since $A\subseteq\kappa$, by
$\kappa$-powerset preservation, $A\in M$. It is easy to see that the
set $S=\{\alpha\in\kappa\mid A\cap\alpha=A_\alpha\}$ is stationary
in $M$. Namely, fix a club $C\in M$ and observe that $\kappa\in
j(S)\cap j(C)$. In particular, $S$ has size $\kappa$.
\end{proof}
Since Ramsey cardinals are weakly Ramsey, it follows that not every
weakly Ramsey cardinal is ineffable. However, consistency
strength-wise, weakly Ramsey cardinals are stronger than ineffable
cardinals. We show below that weakly Ramsey cardinals are limits of
completely ineffable cardinals, where complete ineffability is a
strengthening of ineffability introduced by
\cite{kleinberg:ineffable}.
\begin{definition}
A collection $R\subseteq \Power(\kappa)$ is a \emph{stationary
class} if
\begin{itemize}
\item [(1)] $R\neq\emptyset$,
\item[(2)] for all $A\in R$, $A$ is stationary in $\kappa$,
\item[(3)] if $A\in R$ and $B\supseteq A$, then $B\in R$.
\end{itemize}
\end{definition}
\begin{definition}\label{def:completelyineffable}
A cardinal $\kappa$ is \emph{completely ineffable} if there is a
stationary class $R$ such that for every $A\in R$ and $f:[A]^2\to
2$, there is $H\in R$ homogeneous for $f$.
\end{definition}
In particular, since $\kappa\in R$ for every stationary class $R$,
it follows that if $\kappa$ is completely ineffable, then every
$f:[\kappa]^2\to 2$ has a homogeneous set that is stationary in
$\kappa$. Thus, completely ineffable cardinals are clearly
ineffable.
\begin{lemma}\label{le:homsets}
If $M$ is a weak $\kappa$-model, $U$ is a $1$-good $M$-ultrafilter
on $\kappa$, $A\in U$, and $f:[A]^{<\omega}\to 2$ is in $M$, then
for every $n\in\omega$, there is $H_n\in U$ homogeneous for
$f\restrict[A]^n$.
\end{lemma}
\begin{proof}
Recall that 1-good ultrafilters are weakly amenable, and so we can
define the product ultrafilters $U^n$ for every $n\in\omega$. It is
easy to see that either $\{\arr{\alpha}\in[A]^n\mid
f(\arr{\alpha})=0\}\in U^n$ or $\{\arr{\alpha}\in[A]^n\mid
f(\arr{\alpha})=1\}\in U^n$. Call $X$ the one of the above sets
which is in $U^n$. By Proposition \ref{prop:diagonal}, there is
$B\in U$ such that for all $\alpha_1<\cdots<\alpha_n$ in $B$, the
sequence $\la \alpha_1,\ldots,\alpha_n\ra\in X$. Clearly, letting
$B=H_n$ works.
\end{proof}

\begin{theorem}\label{th:compineff}
Weakly Ramsey cardinals are limits of completely ineffable
cardinals.
\end{theorem}
\begin{proof}
Suppose $\kappa$ is weakly Ramsey. Using Proposition
\ref{prop:ultdef}(2), fix a weak $\kappa$-model $M$ containing
$V_\kappa$ for which there exists a 1-good $M$-ultrafilter $U$ on
$\kappa$, and let $j:M\to N$ be the $\kappa$-powerset preserving
ultrapower by $U$. We will argue that $N\models ``\kappa$ is
completely ineffable". Thus, for every $\alpha<\kappa$, the model
$N$ will satisfy that there is a completely ineffable cardinal
between $\alpha$ and $j(\kappa)$. By elementarity, $M$ will satisfy
that there is a completely ineffable cardinal between $\alpha$ and
$\kappa$. Since $M$ contains $V_\kappa$, it will be correct about
this.

To show that $\kappa$ is completely ineffable in $N$, we need to
build in $N$ a stationary class $R$ satisfying the property of
Definition \ref{def:completelyineffable}. For the argument that
follows it is crucial that $\Power(\kappa)$ exists in $N$. Working
inside $N$, we will define a sequence $\la R_\alpha\mid \alpha\in
Ord^N\ra$ of subsets of $\Power(\kappa)^N$ as follows. Define $R_0$
to be the collection of all stationary subsets of $\kappa$.
Inductively, given $R_\alpha$, define $R_{\alpha+1}$ to be the set
of all $A\in R_\alpha$ such that for every $f:[A]^2\to 2$, there is
$H\in R_\alpha$ homogeneous for $f$. At limits take intersections.
Since the $R_\alpha$ form a decreasing sequence, there is $\theta$
such that $R_\theta=R_{\theta+1}$. Letting $R=R_\theta$, we argue
that it is a stationary class. Note that $R$ satisfies the property
of Definition \ref{def:completelyineffable} by construction. It is
clear that $R$ consists of stationary sets and is closed under
supersets. We verify that $R$ is nonempty by showing that
$U\subseteq R$. Since all elements of $U$ are stationary in $N$, it
follows that $U\subseteq R_0$. Inductively suppose $U\subseteq
R_\alpha$. Fix $A\in U$ and $f:[A]^2\to 2$ in $N$. By
$\kappa$-powerset preservation, $f\in M$, and so by Lemma
\ref{le:homsets} there is $H\in U$ homogeneous for $f$, making $A\in
R_{\alpha+1}$. This completes the argument that $U\subseteq R$, and
hence $R\neq\emptyset$.
\end{proof}
Theorems \ref{th:ineffable} and \ref{th:compineff} establish Theorem
\ref{th:main}(6).
\begin{proposition}
If $\kappa$ is a weakly Ramsey cardinal, then $\diamondsuit_\kappa$
holds.
\end{proposition}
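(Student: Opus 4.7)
The plan is to recycle the argument from Theorem \ref{th:ineffable} almost verbatim, exploiting the fact that any two ordinals in the stationary set it produces automatically witness the subtle property.

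First I would fix a sequence $\vec{A}=\la A_\alpha\mid\alpha\in\kappa\ra$ with each $A_\alpha\subseteq\alpha$, and a club $C\subseteq\kappa$. Using weak Ramseyness, I would choose a weak $\kappa$-model $M$ containing $\vec{A}$, $C$, and $V_\kappa$, together with a $\kappa$-powerset preserving embedding $j:M\to N$. Then I would set $A:=j(\vec{A})(\kappa)$; since $A\subseteq\kappa$, powerset preservation gives $A\in M$.

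Exactly as in Theorem \ref{th:ineffable}, the set $S=\{\alpha\in\kappa\mid A\cap\alpha=A_\alpha\}$ is stationary in $M$: for any club $D\in M$ on $\kappa$ one has $\kappa\in j(D)$ (because $j$ fixes ordinals below $\kappa$, so $D\subseteq j(D)$, and $j(D)$ is closed in $N$) and $\kappa\in j(S)$ (because $j(A)\cap\kappa=A=j(\vec{A})(\kappa)$), whence $j(S)\cap j(D)\neq\emptyset$ and so $S\cap D\neq\emptyset$ in $M$ by elementarity. Because $C\in M$ and $M\supseteq V_\kappa$, the club $C$ is also a club from the point of view of $M$, so $S\cap C$ is stationary in $M$; in particular it has size $\kappa$.

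Now I would pick any two ordinals $\alpha<\beta$ in $S\cap C$. Both lie in $C$, and $\alpha,\beta\in S$ means $A_\alpha=A\cap\alpha$ and $A_\beta=A\cap\beta$, so
\[
A_\beta\cap\alpha=(A\cap\beta)\cap\alpha=A\cap\alpha=A_\alpha,
\]
which witnesses subtlety for $\vec{A}$ on $C$. There is no substantial obstacle beyond reprising the stationarity calculation; the main conceptual point is simply that the ineffable witness $A$ gives a subtle witness for free, because any two initial segments of a single set agree on their common prefix.
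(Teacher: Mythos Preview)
Your proposal is correct and follows essentially the same approach as the paper: define $A=j(\vec A)(\kappa)$, use $\kappa$-powerset preservation to get $A\in M$, argue that $S=\{\alpha<\kappa\mid A\cap\alpha=A_\alpha\}$ is stationary in $M$, and then pick $\alpha<\beta$ in $S\cap C$ to witness subtlety via $A_\beta\cap\alpha=A\cap\alpha=A_\alpha$. The only cosmetic difference is that you also place $V_\kappa$ in $M$, which is harmless but unnecessary here since the clubness of $C$ is already absolute to any transitive $M$ containing $\kappa$ and $C$.
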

\begin{proof}
A weakly Ramsey cardinal is weakly ineffable and
$\diamondsuit_\kappa$ holds for weakly ineffable
cardinals.\footnote{It is shown in \cite{devlin:constructibility}
(Ch. 7, Sec. 2) that $\diamondsuit_\kappa$ holds for ineffable
$\kappa$ and the proof uses only weak ineffability.}
\end{proof}
\begin{theorem}
Ramsey cardinals are limits of weakly Ramsey cardinals.
\end{theorem}
The theorem is included here for completeness of presentation. The
proof relies on the properties of the refined hierarchy of
Ramsey-like large cardinal notions introduced in Section 5. The
hierarchy starts with weakly Ramsey cardinals, has length
$\omega_1+1$ and is bounded above by Ramsey cardinals. In
\cite{gitman:welch}, we show that every large cardinal in the
hierarchy is a limit of each of the large cardinals in the hierarchy
below it. The theorem follows immediately from this fact.

The next theorem establishes the backward direction of Proposition
\ref{prop:ultdef}(3), and hence of Theorem \ref{th:ramseycard}. The
forward direction is a much more complicated argument and will be
discussed in detail in Section \ref{sec:ramsey}.
\begin{theorem}\label{th:backwardram}
If every $A\subseteq\kappa$ is contained in a weak $\kappa$-model
$M$ for which there exists a weakly amenable countably complete
$M$-ultrafilter on $\kappa$, then $\kappa$ is Ramsey.
\end{theorem}
\begin{proof}
Fix $f:[\kappa]^{<\omega}\to 2$ and put it into a weak
$\kappa$-model $M$ for which there exists a weakly amenable
countably complete $M$-ultrafilter $U$ on $\kappa$. Recall that, by
definition of countable completeness, every countable sequence of
sets in $U$ has a nonempty intersection. In fact, the intersection
must have size $\kappa$. To see this, suppose to the contrary that
$A_n$ for $n\in\omega$ are elements of $U$ and $A=\cap_{n\in\omega}
A_n$ is bounded by some $\alpha<\kappa$. Since $\kappa\setminus
\alpha$ is an element of $U$, the intersection of all $A_n$ together
with $\kappa\setminus\alpha$ must be nonempty by countable
completeness of $U$, but this is obviously false. Thus, indeed, the
intersection must have size $\kappa$. By Lemma \ref{le:homsets}, for
every $n\in\omega$, there is $H_n\in U$ homogeneous for
$f\restrict[\kappa]^n$. The intersection $H=\cap_{n\in\omega} H_n$
is a set of size $\kappa$ homogeneous for $f$.
\end{proof}

\begin{theorem}
Strongly Ramsey cardinals are limits of Ramsey cardinals.
\end{theorem}
\begin{proof}
Suppose $\kappa$ is strongly Ramsey. Using Proposition
\ref{prop:ultdef}, fix a $\kappa$-model $M$ for which there is a
weakly amenable $M$-ultrafilter $U$ on $\kappa$ and let $j:M\to N$
be the $\kappa$-powerset preserving ultrapower by $U$. Recall that
since $M$ is a $\kappa$-model, $U$ must be countably complete. Also,
$\kappa$-models always contain $V_\kappa$ as an element. We will
argue that $N\models ``\kappa$ is Ramsey". As before, this suffices
to show that $\kappa$ is a limit of Ramsey cardinals. Fix
$f:[\kappa]^{<\omega}\to 2$ in $N$ and observe that it is in $M$ by
$\kappa$-powerset preservation. By Lemma \ref{le:homsets}, for every
$n\in\omega$, there is $H_n\in U$ homogeneous for
$f\restrict[\kappa]^n$. Since $M^{<\kappa}\subseteq M$, the sequence
$\la H_n:n\in\omega\ra$ is an element of $M$, and hence
$H=\cap_{n\in\omega}H_n$ is an element of $M$. Clearly $H$ is an
element of $N$ as well.
\end{proof}
Note that for the proof above it clearly suffices that
$M^\omega\subseteq M$. Thus, if every $A\subseteq\kappa$ is
contained in a weak $\kappa$-model $M$ with $M^{\omega}\subseteq M$
for which there exists a $\kappa$-powerset preserving embedding,
then $\kappa$ is a limit of Ramsey cardinals. It is an interesting
open question whether it suffices to assume that
$\tmop{cf}^V(\kappa^+)^M\geq\omega_1$ (if $\kappa$ is the largest
cardinal in $M$, then $(\kappa^+)^M=Ord^M$).
\begin{question}
Does the following large cardinal property have more strength than a
Ramsey cardinal: every $A\subseteq\kappa$ is contained in a weak
$\kappa$-model $M$ with $\tmop{cf}^V(\kappa^+)^M\geq\omega_1$ for
which there exists a $\kappa$-powerset preserving embedding $j:M\to
N$?
\end{question}
If $2^\kappa=\kappa^+$ in $N$, then the answer is affirmative. Under
this assumption, we can show that $\kappa$ is Ramsey in $N$. First,
by Proposition \ref{prop:wlog}, we can assume that $j:M\to N$ is the
ultrapower by a 1-good $M$-ultrafilter $U$ on $\kappa$ without
losing $2^\kappa=\kappa^+$ in the target model. Fixing
$f:[\kappa]^{<\omega}\to 2$ in $N$, we know as before that for every
$n\in\omega$, there is a set $H_n$ in $U$ homogeneous for
$f\restrict[\kappa]^n$. Since $2^\kappa=\kappa^+$ holds in $N$, we
can define in $N$ an elementary chain of length $\kappa^+$ of
transitive models of size $\kappa$: $X_0\prec X_1\prec\cdots\prec
X_\alpha\prec\cdots\prec \her{\kappa}^N$ whose union is
$\her{\kappa}^N$. By assumption $(\kappa^+)^N=(\kappa^+)^M$ has
uncountable cofinality, and so all $H_n$ must be contained in some
$X_\alpha$. Since each $X_\xi\in M$, the weak amenability of $U$
implies that $u=U\cap X_\alpha$ is an element of $M$. Since all
$H_n$ are contained in $u$, the model $M$ satisfies that for every
$n\in\omega$, $u$ contains a set homogeneous for
$f\restrict[\kappa]^n$. It follows that there is a sequence $\la
H'_n\mid n\in\omega\ra$ of such sets in $u$ that is an element of
$M$. The intersection $H=\cap_{n\in\omega}H'_n$ is an element of $U$
since $u\subseteq U$ and $U$ is $\kappa$-complete for sequences in
$M$ by the definition of $M$-ultrafilter. Thus, in particular, $H$
has size $\kappa$.

Next, we show that strongly Ramsey cardinals are limits of the
\emph{completely Ramsey} cardinals that top Feng's
$\Pi_\alpha$-Ramsey hierarchy \cite{feng:ramsey}. If $I$ is an ideal
containing all the non-stationary subsets of $\kappa$, let $\mathscr
R^+(I)$ be the collection of all $X\subseteq\kappa$ such that every
$f:[X]^{<\omega}\to 2$ has a homogeneous set in
$\Power(\kappa)\setminus I$. Define an operation $\mathscr R$ on
such ideals by $\mathscr R(I)=\Power(\kappa)\setminus\mathscr
R^+(I)$. Feng showed that the $\mathscr R$ operation applied to an
ideal always yields an ideal and iterated it to define a hierarchy
of ideals on $\kappa$ as follows. Let $I_0$ be the ideal of
non-stationary subsets of $\kappa$. Define $I_{\alpha+1}=\mathscr
R(I_\alpha)$ and $I_\lambda=\cup_{\alpha<\lambda}
I_\alpha$.\footnote{Feng's definition of $I_n$ for $n\in\omega$ is
slightly more subtle, but for our purposes here this simplified
version suffices.} A cardinal $\kappa$ is \emph{completely Ramsey}
if for all $\alpha$, we have $\kappa\notin I_\alpha$. Notice the
similarity here to the completely ineffable cardinals and the fact
that it easily follows that completely Ramsey cardinals are
completely ineffable. Also, completely Ramsey cardinals are clearly
Ramsey since $\kappa\notin I_1$ implies that every
$f:[\kappa]^{<\omega}\to 2$ has a homogeneous set that is stationary
in $\kappa$.

\begin{theorem}\label{th:compramsey}
Strongly Ramsey cardinals are limits of completely Ramsey cardinals.
\end{theorem}
\begin{proof}
Suppose $\kappa$ is strongly Ramsey. By Proposition
\ref{prop:ultdef}, fix a $\kappa$-model $M$ for which there is a
weakly amenable $M$-ultrafilter $U$ on $\kappa$, and let $j:M\to N$
be the $\kappa$-powerset preserving ultrapower by $U$. Now argue
that $U$ is contained in the complement of every $I_\alpha$ as
defined in $N$. Thus, $N\models ``\kappa\notin I_\alpha$ for all
$\alpha$", and so $\kappa$ is completely Ramsey in $N$.
\end{proof}
In \cite{gitman:ramseyindes}, we show that it is consistent that
there is a strongly Ramsey cardinal that is not ineffable. This
follows by showing that it is consistent to have a strongly Ramsey
$\kappa$ with a slim $\kappa$-Kurepa tree. Ineffable cardinals can
never have slim Kurepa trees. Since completely Ramsey cardinals are
in particular ineffable, this implies that a strongly Ramsey
cardinal need not be completely Ramsey.

Theorem \ref{th:compramsey} together with the remarks above
establish Theorem \ref{th:main}(3).

If $\kappa$ is strongly Ramsey but not ineffable, it will not be
possible to put every $A\subseteq\kappa$ into a stationarily correct
$\kappa$-model for which there exists a $\kappa$-powerset preserving
embedding. A $\kappa$-model for which there exists an
$\kappa$-powerset preserving embedding always believes that $\kappa$
is ineffable and if it is stationarily correct, it will be correct
about this. This motivates the definition of super Ramsey cardinals
which guarantee that we can always get a $\kappa$-model that is
stationarily correct. In particular, note that super Ramsey
cardinals are ineffable.
\begin{theorem}\label{th:super_ramsey_are_strongly_ramsey}
Super Ramsey cardinals are limits of strongly Ramsey cardinals.
\end{theorem}
\begin{proof}
Suppose $\kappa$ is super Ramsey. Choose a $\kappa$-model $M\prec
\her{\kappa}$ for which there exists a $\kappa$-powerset preserving
embedding $j:M\to N$. As usual, it suffices to show that $N\models
``\kappa$ is strongly Ramsey". By Proposition \ref{prop:wlog}, we
can assume, without loss of generality, that the strong Ramsey
embeddings have targets of size $\kappa$. It follows that
$\her{\kappa}\models ``\kappa$ is strongly Ramsey", and therefore
$M\models ``\kappa$ is strongly Ramsey" by elementarity. By
$\kappa$-powerset preservation, $N$ has no new subsets of $\kappa$,
and so it must agree that $\kappa$ is strongly Ramsey.
\end{proof}
Theorem \ref{th:super_ramsey_are_strongly_ramsey} establishes
Theorem \ref{th:main}(2).

\begin{theorem}\label{th:inconsistent}
There are no superlatively Ramsey cardinals.
\end{theorem}
\begin{proof}
Suppose that there exists a superlatively Ramsey cardinal and let
$\kappa$ be the \emph{least} superlatively Ramsey cardinal.  Choose
any $\kappa$-model $M\prec \her{\kappa}$ and a $\kappa$-powerset
preserving embedding $j:M\to N$. The strategy will be to show that
$\kappa$ is superlatively Ramsey in $N$. Observe first that
$\her{\kappa}^N=M$. Thus, to show that $\kappa$ is superlatively
Ramsey in $N$, we need to verify in $N$ that every $\kappa$-model
$m\prec M$ has a $\kappa$-powerset preserving embedding. So let
$m\in N$ be a $\kappa$-model such that $m\prec M$. Observe that
$m\in M$ and $m\prec \her{\kappa}$. By Proposition \ref{prop:wlog},
$\her{\kappa}$ contains a $\kappa$-powerset preserving embedding for
$m$. By elementarity, $M$ contains some $\kappa$-powerset preserving
embedding $h:m\to n$. Clearly $h\in N$ as well. Thus, $\kappa$ is
superlatively Ramsey in $N$. It follows that there is a
superlatively Ramsey cardinal $\alpha$ below $\kappa$. This is, of
course, impossible since we assumed that $\kappa$ was the least
superlatively Ramsey cardinal.
\end{proof}
Theorem \ref{th:inconsistent} establishes Theorem \ref{th:main}(7).

Theorem \ref{th:inconsistent} is surprising since, as was pointed
out earlier, the embedding properties described in Definitions
\ref{def:wrp}, \ref{def:srp}, \ref{def:strongrp}, and \ref{def:trp}
without $\kappa$-powerset preservation are equivalent modulo the
assumption that $\kappa^{<\kappa}=\kappa$. Once we add the powerset
condition on the embeddings, the equivalence is strongly violated.
Of course, now the question arises whether there can be any super
Ramsey cardinals.
\begin{theorem}\label{th:consistent}
If $\kappa$ is a measurable cardinal, then $\kappa$ is a super
Ramsey limit of super Ramsey cardinals.
\end{theorem}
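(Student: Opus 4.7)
\emph{Plan.} Let $j\colon V\to M$ be the ultrapower by a normal measure $U$ on $\kappa$; then $M^\kappa\subseteq M$, and in particular $\her{\kappa}^V=\her{\kappa}^M$. I first show that $\kappa$ itself is super Ramsey by a Skolem--L\"owenheim plus derived-ultrafilter construction, and then deduce that super Ramseys are unbounded below $\kappa$ by a \L o\'s-style reflection.

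\emph{Super Ramsey at $\kappa$.} Given $A\subseteq\kappa$, build (using $\kappa^{<\kappa}=\kappa$) a $\kappa$-model $K\prec\la\her{\kappa},\in,U\ra$ with $A\in K$, where $U$ is adjoined as a unary predicate; pass to the Mostowski collapse if necessary so that $K$ is transitive. Let $U_K:=U\cap K$. By elementarity in the expanded language, $K$ sees $U_K$ as a normal $K$-ultrafilter on $\kappa$; countable completeness of $U$ in $V$ makes the ultrapower $h\colon K\to L$ well-founded (so $L$ is transitive), with critical point $\kappa$. The key verification is that $h$ is $\kappa$-powerset preserving. One inclusion is immediate: if $B\in K$ with $B\subseteq\kappa$, then $B=h(B)\cap\kappa\in L$. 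For the other, let $B\in L$ with $B\subseteq\kappa$, and write $B=h(f)(\kappa)$ for some $f\in K$ with $f\colon\kappa\to\Power(\kappa)$. For each $\gamma<\kappa$ the set $S_\gamma:=\{\alpha<\kappa\colon\gamma\in f(\alpha)\}$ lies in $K$ (it is $\Delta_0$-definable from $f,\gamma\in K$), and by \L o\'s's theorem $\gamma\in B$ iff $S_\gamma\in U_K$ iff $S_\gamma\in U$. Thus $B=\{\gamma<\kappa\colon S_\gamma\in U\}$, which is definable in $\la\her{\kappa},\in,U\ra$ from the parameter $f$; by elementarity of $K$ in that expanded structure, $B\in K$.

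\emph{Limit step and main obstacle.} By Proposition \ref{prop:wlog}, every witness to ``$\kappa$ is super Ramsey'' can be taken of size $\kappa$, so the property depends only on $\her{\kappa}$ together with its satisfaction predicate; since $\her{\kappa}^V=\her{\kappa}^M$, that predicate agrees in $V$ and $M$, making ``$\kappa$ is super Ramsey'' absolute between them. Hence $M\models``\kappa\text{ is super Ramsey}''$, and by \L o\'s's theorem $\{\alpha<\kappa\colon\alpha\text{ is super Ramsey}\}\in U$ and is in particular unbounded in $\kappa$. The main difficulty is the reverse inclusion in the key verification above: without adjoining $U$ as a predicate when choosing $K$, the subsets of $\kappa$ produced in the ultrapower need not lie in $K$, because their definitions refer to $U$, an object of size $2^\kappa$ lying outside $\her{\kappa}$; adjoining $U$ to the language before the Skolem--L\"owenheim construction is precisely what supplies $K$ with enough information about $U$ to capture these sets.
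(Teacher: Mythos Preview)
Your proof is correct and takes a genuinely different route from the paper's. The paper restricts the \emph{embedding}: it forms the structure $\langle\her{j(\kappa)}^M,\her{\kappa},j\rangle$, takes a size-$\kappa$ elementary substructure closed under ${<}\kappa$-sequences and containing $A$ and $\kappa+1$, and collapses; the resulting $h:K\to N$ is $\kappa$-powerset preserving immediately by elementarity of the substructure, with no separate verification needed. You instead restrict the \emph{measure}: you adjoin $U$ as a unary predicate to $\her{\kappa}$, take a $\kappa$-model $K\prec\langle\her{\kappa},\in,U\rangle$, and form the ultrapower by $U\cap K$; powerset preservation (equivalently, weak amenability of $U\cap K$) is then deduced from elementarity in the $U$-predicate via the computation with the sets $S_\gamma$. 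Both constructions are standard; yours has the virtue of never leaving $\her{\kappa}$, while the paper's makes powerset preservation a one-line consequence rather than a genuine verification.

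Two minor remarks. First, the phrase ``pass to the Mostowski collapse if necessary'' is misleading: if $K$ were not already transitive, collapsing it would destroy the relation $K\prec\her{\kappa}$. What you actually want is to put $\kappa+1$ into the starting set of the Skolem--L\"owenheim construction; then $\kappa\subseteq K$ and $K\prec\her{\kappa}$ force $K$ to be transitive outright. Second, your limit step is correct but more indirect than necessary. Rather than appealing to an abstract absoluteness principle about $\her{\kappa}$ and its satisfaction predicate, you can simply note (as the paper does) that the specific witnesses $K$, $L$, $h$ you constructed have size $\kappa$, hence lie in $M$ by $M^\kappa\subseteq M$, and that $M$ recognizes them as witnesses because $\her{\kappa}^M=\her{\kappa}^V$ and the remaining conditions are absolute.
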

\begin{proof}
Suppose $\kappa$ is a measurable cardinal. Fix $A\subseteq \kappa$
and a normal $\kappa$-complete ultrafilter $U$ on $\kappa$. Take the
structure $\la \her{\kappa},\in,U\ra$ and using a L\"owenheim-Skolem
type construction, find an elementary substructure $\la N,\in, U\cap
N\ra$ such that $N$ is a $\kappa$-model and $A\in N$. Since $U$ is a
weakly amenable $\her{\kappa}$-ultrafilter, it follows by
elementarity that $U\cap N$ is a weakly amenable $N$-ultrafilter.
Thus, we found a $\kappa$-model $N$ containing $A$ for which there
exists a weakly amenable $N$-ultrafilter on $\kappa$. By Proposition
\ref{prop:ultdef}, this completes the proof that measurable
cardinals are super Ramsey. To see that they are limits of super
Ramsey cardinals, observe that if $j:V\to M$ is an elementary
embedding with critical point $\kappa$, then both $N$ and $N\cap U$
from the construction above are elements of $M$ by the virtue of
having transitive closure of size $\kappa$. So $M$ agrees that
$\kappa$ is super Ramsey, and it follows that $\kappa$ is a limit of
super Ramsey cardinals in $V$.
\end{proof}
Theorem \ref{th:consistent} completes the proof of Theorem
\ref{th:main}(1).
\begin{corollary}
Con{\rm (}\/${\rm ZFC}+\exists$ measurable cardinal{\rm )}
$\Longrightarrow$ Con{\rm (}\/${\rm ZFC}+\exists$ proper class of
super Ramsey cardinals{\rm )}
\end{corollary}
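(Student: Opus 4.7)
The plan is the standard reflection-to-$V_\kappa$ argument, leveraging the theorem just proved. Start with a model of ZFC containing a measurable cardinal $\kappa$. Since being measurable implies inaccessibility, $V_\kappa$ is a model of ZFC. The task is then to show that $V_\kappa$ believes there is a proper class of super Ramsey cardinals, i.e., that the super Ramsey cardinals below $\kappa$ form an unbounded subset of $\kappa$, and that $V_\kappa$ correctly identifies them as super Ramsey.

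First I would invoke Theorem \ref{th:consistent}: a measurable $\kappa$ is a super Ramsey limit of super Ramsey cardinals. In particular the set of super Ramsey cardinals below $\kappa$ is unbounded in $\kappa$, and hence forms a proper class from the point of view of $V_\kappa$.

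Second, I would verify that the property ``$\alpha$ is super Ramsey'' is absolute between $V$ and $V_\kappa$ for every $\alpha<\kappa$. This is where a routine but necessary check is needed. The witnesses demanded by the super Ramsey embedding property are: a $\alpha$-model $M\prec\her{\alpha}$ with $A\in M$, and a $\alpha$-powerset preserving embedding $j:M\to N$. By Proposition \ref{prop:wlog} we may take $N$ to have size $\alpha$, so both $M$ and $N$, together with $j$, lie in $\her{\alpha^+}\subseteq V_\kappa$ (using $\kappa$ inaccessible and $\alpha<\kappa$). The structure $\her{\alpha}$ itself is in $V_\kappa$, and the statements ``$M\prec\her{\alpha}$'', ``$j$ is elementary'', ``$j$ has critical point $\alpha$'', and ``$M$ and $N$ have the same subsets of $\alpha$'' are all absolute between $V_\kappa$ and $V$. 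Hence $\alpha$ being super Ramsey is witnessed in $V_\kappa$ exactly when it is witnessed in $V$.

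Combining these two points, $V_\kappa \models \mathrm{ZFC} + $ ``there is a proper class of super Ramsey cardinals'', giving the desired consistency implication. The main obstacle, such as it is, is the absoluteness verification in the second step; everything else is an immediate consequence of the theorem.
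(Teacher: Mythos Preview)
Your argument is correct. The paper states this corollary without proof, treating it as an immediate consequence of Theorem~\ref{th:consistent}; your write-up supplies exactly the routine $V_\kappa$ reflection and absoluteness check that the reader is expected to fill in.
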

It is an interesting observation that the Ramsey-like cardinals are
incompatible with the Hauser property of weakly compact cardinals
(Theorem \ref{th:wc}(8)). The Hauser property plays a key role in
many indestructibility by forcing arguments.
\begin{proposition}\label{prop:hauser}
If a cardinal $\kappa$ has the property that every
$A\subseteq\kappa$ is contained in a weak $\kappa$-model $M$ for
which there exists a $\kappa$-powerset preserving embedding $j:M\to
N$ such that $j``\Power(\kappa)^M$ is an element of $N$, then
$\kappa$ is a limit of measurable cardinals.
\end{proposition}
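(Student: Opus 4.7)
The plan is to extract an $M$-ultrafilter $U$ from the embedding, use the Hauser-type hypothesis to place $U$ inside $N$, argue that $N$ sees $U$ as a normal measure witnessing measurability of $\kappa$, and then reflect this measurability below $\kappa$ using the elementarity of $j$ together with the fact that we may arrange $V_\kappa\in M$.

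First I would fix $A\subseteq\kappa$ coding $V_\kappa$ and apply the hypothesis to obtain a weak $\kappa$-model $M\ni A$ together with a $\kappa$-powerset preserving embedding $j:M\to N$ with $j``P\in N$, where $P=\Power(\kappa)^M=\Power(\kappa)^N$. Let $U=\{B\in P:\kappa\in j(B)\}$, the induced $M$-ultrafilter. Because $j$ has critical point $\kappa$, $j(B)\cap\kappa=B$ for every $B\in P$, and so inside $N$ one can write
\[ U=\{X\cap\kappa : X\in j``P,\ \kappa\in X\}, \]
placing $U\in N$. The same parameter $j``P$ lets $N$ recover the restriction $j\restriction P$ via the graph $\{(X\cap\kappa,X):X\in j``P\}$.

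Next I would invoke Proposition \ref{prop:wlog} to assume $j$ is the ultrapower of $M$ by $U$ and then verify that $N\models$ ``$U$ is a $\kappa$-complete normal ultrafilter on $\kappa$''. That $U$ is a non-principal ultrafilter in $N$'s sense follows from $\Power(\kappa)^N=P$ combined with $U$ being an $M$-ultrafilter. For $\kappa$-completeness under $N$-sequences, given $\la A_\beta:\beta<\gamma\ra\in N$ with $\gamma<\kappa$ and each $A_\beta\in U$, represent it as $[F]_U$ for some $F\in M$, and set $G_\beta(\alpha)=F(\alpha)(\beta)$ and $H(\alpha)=\bigcap_{\beta<\gamma}G_\beta(\alpha)$; the sequence $\la G_\beta\ra$ and the function $H$ both lie in $M$, and a standard ultrapower computation using $M$-$\kappa$-completeness of $U$ gives $[H]_U=\bigcap_\beta A_\beta$. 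Using that $j\restriction P\in N$, one then concludes inside $N$ that this intersection belongs to $U$; normality is handled analogously by treating regressive functions $f\in N$ via their $M$-representatives and appealing to $M$-normality.

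Having established $N\models$ ``$\kappa$ is measurable'', for each $\beta<\kappa$ the model $N$ sees a measurable cardinal strictly between $\beta$ and $j(\kappa)$ (namely $\kappa$ itself), so by elementarity of $j$ the model $M$ satisfies that there is a measurable strictly between $\beta$ and $\kappa$; and because $V_\kappa\in M$, the model $M$ correctly identifies measurability of cardinals below $\kappa$, so $\kappa$ is a limit of measurable cardinals in $V$. The main obstacle will be the verification of $\kappa$-completeness of $U$ in $N$: an arbitrary $N$-sequence of elements of $U$ need not lie in $M$, and the $M$-completeness of $U$ does not apply directly. This is precisely the place where the Hauser-type hypothesis $j``P\in N$ is doing the essential work: it allows $N$ to compute $j$-images of elements of $P$ internally and thereby to reduce each $N$-sequence to one represented by a function in $M$, where $M$-$\kappa$-completeness finishes the argument.
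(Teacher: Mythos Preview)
Your overall strategy matches the paper's: place $U$ into $N$ via $j``P\in N$, argue that $N\models``\kappa$ is measurable'', and reflect below $\kappa$ using $V_\kappa\in M$. But you work much harder than necessary on the middle step and misdiagnose where the hypothesis does its work.

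The paper dispatches ``$U$ is an $N$-ultrafilter'' in one line: since $\Power(\kappa)^M=\Power(\kappa)^N$ and $U$ is already an $M$-ultrafilter, $U$ is automatically an $N$-ultrafilter. The point is that the assertion ``$U$ is a normal ultrafilter on $\kappa$'' quantifies only over subsets of $\kappa$, regressive functions on $\kappa$, and $\gamma$-sequences of subsets of $\kappa$ for $\gamma<\kappa$, and all of these objects are codable by subsets of $\kappa$; hence $M$ and $N$ contain exactly the same such witnesses, and the statement transfers. In particular, your ``main obstacle'' is illusory: any sequence $\la A_\beta:\beta<\gamma\ra\in N$ with $\gamma<\kappa$ and each $A_\beta\subseteq\kappa$ is codable by a single subset of $\kappa$, so already lies in $M$, and $M$-$\kappa$-completeness of $U$ applies directly. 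The hypothesis $j``P\in N$ is needed \emph{only} to get $U\in N$; your ultrapower-representation detour and the appeal to $j\restriction P\in N$ for the completeness verification are unnecessary (and the latter step, as written, is left vague --- to actually conclude $\kappa\in j(\bigcap_\beta A_\beta)$ you would in any case need the coding observation above).
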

\begin{proof}
Fix a weak $\kappa$-model $M$ containing $V_\kappa$ for which there
exists a $\kappa$-powerset preserving embedding $j:M\to N$ such that
$X=j``\Power(\kappa)^M$ is an element of $N$. Consider the usual
$M$-ultrafilter $U=\{B\in\Power(\kappa)^M\mid\kappa\in j(B)\}$.
Since $X\in N$, we can define the set $\{C\cap\kappa\mid C\in
X\text{ and }\kappa\in C\}=\{B\subseteq\kappa\mid \kappa\in
j(B)\}=U$ in $N$. Therefore $U$ is an element of $N$. By the
$\kappa$-powerset preservation property, $U$ is also an
$N$-ultrafilter, and hence $N$ thinks that $\kappa$ is measurable.
It follows that $\kappa$ must be a limit of measurable cardinals.
\end{proof}
Thus, having $j$ as an element of $N$ pushes the large cardinal
strength beyond a measurable cardinal. For instance, if $\kappa$ is
$2^\kappa$-supercompact, then $\kappa$ will have the above property.
To see this, fix a $2^\kappa$-supercompact embedding $j:V\to M$ and
$A\subseteq\kappa$. Choose some cardinal $\lambda$ such that
$j(\lambda)=\lambda$ and $j``2^\kappa\in \her{\lambda}^M$. We first
restrict $j$ to a set embedding $j:\her{\lambda}\to\her{\lambda}^M$
and observe that $\her{\lambda}^M\subseteq \her{\lambda}$. Thus, it
makes sense to consider the structure $\la
\her{\lambda},\her{\lambda}^M,j\ra$. Take an elementary substructure
$\la K',N',h'\ra$ of  $\la \her{\lambda}, \her{\lambda}^M,j\ra$ of
size $\kappa$ such that $A\in K'$, $j``2^\kappa\in K'$,
$\kappa+1\subseteq K'$, and $K'^{<\kappa}\subseteq K'$. Let
$\pi:K'\to K$ be the Mostowski collapse, $\pi`` N'=N$, and
$h=\pi``h'$. Observe that $N$ is the Mostowski collapse of $N'$. It
is easy to check that $K$ is a $\kappa$-model containing $A$, the
map $h:K\to N$ is a $\kappa$-powerset preserving embedding, and
$h``\Power(\kappa)^K$ is an element of $N$.

Recall from Theorem \ref{th:wc}(2) that weakly compact cardinals can
be characterized by the \emph{extension property}. Finally, we will
show that weakly Ramsey cardinals also have an extension-like
property. Suppose $\x\subseteq \Power(\kappa)$. The structure $\la
V_\kappa,\in,B\ra_{B\in \x}$ will be the structure in the language
consisting of $\in$ and unary predicate symbols for every element of
$\x$ with the natural interpretation.
\begin{theorem}
A cardinal $\kappa$ is weakly Ramsey if and only if every
$A\subseteq\kappa$ belongs to a collection $\x\subseteq
\Power(\kappa)$ such that the structure $\la V_\kappa,\in,
B\ra_{B\in\x}$ has a proper transitive elementary extension $\la
W,\in,B^*\ra_{B\in\x}$ with $\Power(\kappa)^W=\x$.
\end{theorem}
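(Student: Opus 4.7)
For the forward direction, assume $\kappa$ is weakly Ramsey and fix $A \subseteq \kappa$. Using that the weak Ramsey embedding property extends to elements of $\her{\kappa}$, choose a weak $\kappa$-model $M$ containing both $A$ and $V_\kappa$ together with a $\kappa$-powerset preserving embedding $j \colon M \to N$. Set $\x := \Power(\kappa)^M$, $W := V_{j(\kappa)}^N$, and $B^* := j(B)$ for each $B \in \x$. Then $W$ is transitive and properly extends $V_\kappa$ (since $\kappa \in W \setminus V_\kappa$), with $\Power(\kappa)^W = \Power(\kappa)^N = \x$ by $\kappa$-powerset preservation. The required elementarity in the $\x$-predicated language is obtained by relativizing each formula to $V_\kappa^M = V_\kappa$ inside $M$ and pushing it through $j$ to $V_{j(\kappa)}^N = W$, using that $j\upharpoonright V_\kappa$ is the identity.

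For the backward direction, fix $A$ and let $\x \ni A$ together with the extension $\la W, \in, B^*\ra_{B \in \x}$ as given. Restricting the language to just the single $A$-predicate recovers property $(2)$ of Theorem \ref{th:wc}, so $\kappa$ is weakly compact, hence inaccessible, giving $V_\kappa \models \mathrm{ZFC}$ and therefore $W \models \mathrm{ZFC}$. By an iterative Skolem hull construction inside $W$---start from a Skolem hull $W_0$ of $V_\kappa \cup \{\kappa, A\}$ of size $\kappa$ in the $\{A\}$-language, alternately enlarge the language to $\x_{n+1} := \Power(\kappa)^{W_n}$ and take the next hull $W_{n+1}$, and take unions and a Mostowski collapse at stage $\omega$---I reduce to the case $|W| = \kappa$ and $|\x| \leq \kappa$. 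Now let $M$ be the Mostowski collapse of the Skolem hull in $W$ (in the $\in$-language) of $V_\kappa \cup \x \cup \{\kappa, A\}$; since $V_\kappa \cup \x \cup \{\kappa\}$ is transitive the collapse fixes it, so $M$ is a weak $\kappa$-model containing $A$ with $M \cap \Power(\kappa) = \x$.

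Define $U := \{B \in \x : \kappa \in B^*\}$. Elementarity of the extension applied to the formulas defining complement, finite intersection, and the value of a regressive function shows that $U$ is a normal $M$-ultrafilter on $\kappa$. The main obstacle is well-foundedness of $\mathrm{Ult}(M,U)$, which I address via a factor map $k \colon \mathrm{Ult}(M,U) \to W$ given on ordinals by $k([f]_U) := f^*(\kappa)$, where each $f \colon \kappa \to \mathrm{Ord}$ in $M$ is coded via pairing as an element of $\x$ and $f^*$ is its interpretation in $W$, a function whose domain contains $\mathrm{Ord}^W$ by elementarity. Applying elementarity to the defining formulas of $\{\alpha : f(\alpha) = g(\alpha)\}$ and $\{\alpha : f(\alpha) < g(\alpha)\}$ shows $k$ is well-defined and order-preserving on ordinals, so the ordinals of the ultrapower inject into $\mathrm{Ord}^W$ and well-foundedness transfers. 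Let $N$ be the transitive collapse of the ultrapower and $j \colon M \to N$ the induced embedding.

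For $\kappa$-powerset preservation: for any $X \in \Power(\kappa)^N$, write $X = [f]_U$ where we may assume $f(\alpha) \subseteq \alpha$ on a $U$-large set, so elementarity gives $f^*(\kappa) \subseteq \kappa$ in $W$, and the identity
\[ \gamma \in X \iff \{\alpha < \kappa : \gamma \in f(\alpha)\} \in U \iff \gamma \in f^*(\kappa) \]
shows $X = f^*(\kappa) \in \Power(\kappa)^W = \x = M \cap \Power(\kappa)$. The reverse inclusion $M \cap \Power(\kappa) \subseteq N \cap \Power(\kappa)$ is immediate from the elementarity of $j$ (each such $B$ equals $j(B) \cap \kappa \in N$). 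This verifies the weak Ramsey embedding property for $A$, completing the equivalence.
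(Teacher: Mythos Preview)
Your forward direction matches the paper's. The backward direction has the right architecture, but the well-foundedness step contains a real gap.

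Your factor map $k([f]_U)=f^*(\kappa)$ is only well-defined when $f$ can be coded \emph{via pairing} as an element of $\x$, i.e., when $f:\kappa\to\kappa$. That suffices to embed the ordinals of the ultrapower \emph{below} $j(\kappa)$ into $\text{Ord}^W$, but a putative descending chain $[f_0]_U\ni[f_1]_U\ni\cdots$ need not live there: the functions $f_n:\kappa\to\text{Ord}^M$ may have range unbounded in $\kappa$, and then no pairing on $\kappa$ codes them. If you try to repair this by choosing a bijection $b:\kappa\to\gamma$ in $M$ (or by coding $f_n$ as a well-founded relation on $\kappa$), you transfer the problem to showing that the $*$-interpretation in $W$ of that code is \emph{externally} well-founded. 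Elementarity only tells you that $W$ \emph{believes} it is well-founded, which is not enough.

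This is precisely the point the paper isolates and handles separately. The paper takes $M=H_{\kappa^+}^W$ (so that every $f\in M$ has a $\kappa$-code in $\x$), and then proves the key lemma: if $C\in\x$ codes a well-founded relation on $\kappa$, then $C^*$ codes a well-founded relation on $\text{Ord}^W$. The argument uses weak compactness to produce a \emph{well-founded} elementary extension $\la X,E,B^{**}\ra_{B\in\x}$ of $\la W,\in,B^*\ra_{B\in\x}$ possessing an ordinal above $\text{Ord}^W$; since $X$ sees a rank function for $C^{**}$ restricted to that ordinal and would detect any ill-foundedness already present in $C^*$, the relation $C^*$ must be well-founded. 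Only with this lemma in hand can one push a descending chain in the ultrapower through the codes $F_n^*$ and obtain a genuine descending $\in$-chain in $V$. Your proof needs either this lemma or an equivalent device; the bare factor-map argument does not supply it.
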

\begin{proof}
($\Longrightarrow$): Suppose that $\kappa$ is  weakly Ramsey and
$A\subseteq\kappa$. Fix a weak $\kappa$-model $M$ containing $A$ and
$V_\kappa$ for which there exists a $\kappa$-powerset preserving
embedding $j:M\to N$. If we let $\x=\Power(\kappa)^M$, then $\la
V_\kappa,\in, B\ra_{B\in\x}\prec \la
V_{j(\kappa)},\in,j(B)\ra_{B\in\x}$.

($\Longleftarrow$): Fix $A\subseteq\kappa$. The set $A$ belongs to a
collection $\x\subseteq \Power(\kappa)$ such that the structure $\la
V_\kappa,\in,B\ra_{B\in\x}$ has a proper transitive elementary
extension $\la W,\in, B^*\ra_{B\in\x}$ with $\Power(\kappa)^W=\x$.
We can assume that $W$ has size $\kappa$ since if this is not the
case, we can take an elementary substructure of size $\kappa$ which
contains $V_\kappa$ as a subset and collapse it. Since $V_\kappa$
satisfies that $\her{\alpha}$ exists for every $\alpha<\kappa$, it
follows by elementarity that $\her{\kappa}$ exists in $W$.

Let $M=\her{\kappa}^W$ and observe that $M$ is a weak $\kappa$-model
containing $A$. Define $U=\{B\in \x\mid \kappa\in B^*\}$. We claim
that $U$ is a \hbox{1-good} $M$-ultrafilter. By Proposition
\ref{prop:ultdef}, we will be done if we can verify the claim. Thus,
we need to verify that $U$ is a weakly amenable $M$-ultrafilter and
that the ultrapower by $U$ is well-founded.

It is clear that $\la M,\in, U\ra\models ``U$ is an ultrafilter". To
check that $\la M,\in, U\ra\models ``U$ is normal", fix a regressive
$f:B\to \kappa$ in $M$ for some $B\in U$. Since we can code $f$ as a
subset of $\kappa$ and $\Power(\kappa)^W=\x$, we can think of $f$ as
being in $\x$. Now we can consider the regressive $f^*:B^*\to
\kappa^*$ and let $f^*(\kappa)=\alpha<\kappa$ since $\kappa\in B^*$.
Thus, $\kappa\in C^*$ where $C=\{\xi\in\kappa\mid f(\xi)=\alpha\}$,
and hence $C\in U$. Thus $\la M,\in, U\ra\models ``U$ is normal". It
is a standard exercise to show that a normal ultrafilter on $\kappa$
containing all the tail subsets of $\kappa$ is $\kappa$-complete,
and therefore since this property easily holds of $U$, we have $\la
M,\in,U\ra\models``U$ is $\kappa$-complete". This completes the
argument that $U$ is an $M$-ultrafilter.

To show that $U$ is weakly amenable, fix $\la
B_\alpha\mid\alpha<\kappa\ra$ a sequence in $M$ of subsets of
$\kappa$. We need to see that the set $C=\{\alpha\in\kappa\mid
B_{\alpha}\in U\}$ is in $M$. Again, since we can code the sequence
$\la B_\alpha\mid \alpha<\kappa\ra$ as a subset of $\kappa$, we
think of it as being in $\x$. Thus, in $W$, we can define the set
$\{\alpha\in\kappa\mid \kappa\in B^*_\alpha\}$, and it is clear that
this set is exactly $C$.

It remains to show that the ultrapower of $M$ by $U$ is
well-founded. It will help first to verify that if $C\in\x$ codes a
well-founded relation on $\kappa$, then $C^*$ codes a well-founded
relation on $\text{Ord}^W$. If $C\in\x$ codes a well-founded
relation, \hbox{$\la V_\kappa,\in,B\ra_{B\in\x}$} satisfies that
$C\upharpoonright\alpha$ has a rank function for all
$\alpha<\kappa$. It follows that \hbox{$\la W,\in, B^*\ra_{B\in\x}$}
satisfies that $C^*\upharpoonright \alpha$ has a rank function for
all $\alpha<\text{Ord}^W$. Since $\kappa$ is weakly compact and we
assumed that $W$ has size $\kappa$, we can find a
\emph{well-founded} elementary extension $\la X, E,
B^{**}\ra_{B\in\x}$ for the structure $\la W,\in,B^*\ra_{B\in\x}$
satisfying that there exists an ordinal above the ordinals of $W$
(Theorem \ref{th:wc}(1)). There is no reason to expect that $X$ is
an end-extension or that $E$ is the true membership relation, but
that is not important for us. We only care that $E$ is well-founded
and $X$ thinks it has an ordinal $>\text{Ord}^W$. By elementarity,
it follows that $\la X,E, B^{**}\ra_{B\in\x}$ satisfies that
$C^{**}\upharpoonright \alpha$ has a rank function for all
$\alpha<\text{Ord}^X$. In particular, if $\alpha>\text{Ord}^W$ in
$X$, then $\la X,E, B^{**}\ra_{B\in\x}$ satisfies that
$C^{**}\upharpoonright \alpha$ has a rank function. Since the
structure $\la X,E, B^{**}\ra_{B\in\x}$ is well-founded and can only
add new elements to $C^*$, if $C^*$ was not well-founded to begin
with, $X$ would detect this. Hence $C^*$ is well-founded.

Now we go back to proving that the ultrapower of $M$ by $U$ is
well-founded. Suppose towards a contradiction that there exists a
membership descending sequence $\ldots E\,[f_n]\,E\ldots
E\,[f_1]\,E\,[f_0]$ of elements of the ultrapower. Each
$f_n:\kappa\to M$ is an element of $M$, and for every $n\in\omega$,
the set $A_n=\{\alpha\in\kappa\mid f_{n+1}(\alpha)\in
f_n(\alpha)\}\in U$. In $M$, we will define subsets $E_n$ and $F_n$
of $\kappa$ coding information about $f_n$. Fix a transitive set
$T_n$ in $M$ such that the range of $f_n$ is contained in $T_n$ and
let $E_n$ be a relation on $\kappa$ coding $T_n$. Let $F_n$ be a
function on $\kappa$ such that $F_n(\alpha)$ is the element
representing $f_n(\alpha)$ in $E_n$. Next, we define subsets $B_n$
of $\kappa$ as follows. If $\alpha\in A_n$, then $f_{n+1}(\alpha)\in
f_n(\alpha)$ and therefore the transitive closure of
$f_{n+1}(\alpha)$ is a subset of the transitive closure of
$f_n(\alpha)$. Thus, for $\alpha\in A_n$, there is a membership
preserving map $\varphi_\alpha^n$ mapping a transitive subset of
$E_{n+1}$ containing $F_{n+1}(\alpha)$ onto a transitive subset of
$E_n$ such that $\varphi_\alpha^n(F_{n+1}(\alpha))$ is an $E_n$
element of $F_n(\alpha)$. Let $B_n$ code a collection of such maps
$\varphi_\alpha^n$ for $\alpha\in A_n$.

By the observation above, each $E_n^*$ codes a well-founded relation
on $\kappa^*$. Since each $A_n\in U$, it follows that $\kappa\in
A_n^*$ and so, by elementarity, $B_n^*$ codes a membership
preserving map $\varphi^n_\kappa$ from a transitive subset of
$E_{n+1}^*$ to a transitive subset of $E_n^*$ such that
$\varphi^n_\kappa(F^*_{n+1}(\kappa))$ is an $E_n^*$ element of
$F^*_n(\kappa)$. If we let $\pi_n$ be the Mostowski collapse of
$E_n^*$, then by the uniqueness of the Mostowski collapse, we have
that
$\pi_{n+1}(F^*_{n+1}(\kappa))=\pi_n(\varphi^n_\kappa(F^*_{n+1}(\kappa)))$
and by the definition of $\varphi^n_\kappa$, we have that
$\pi_n(\varphi^n_\kappa(F^*_{n+1}(\kappa)))$ is an element of
$\pi_n(F^*_n(\kappa))$. It follows that the elements
$\pi_n(F^*_n(\kappa))$ form a descending $\in$-sequence. Thus, we
have reached a contradiction showing that the ultrapower of $M$ by
$U$ is well-founded.
\end{proof}
\section{Ramsey Cardinals}\label{sec:ramsey}
Recall that Theorem \ref{th:ramseycard} gave a characterization of
Ramsey cardinals in terms of the existence of elementary embeddings
for weak $\kappa$-models of set theory. For convenience, we restate
the theorem below.
\begin{theo}
A cardinal $\kappa$ is Ramsey if and only if every
$A\subseteq\kappa$ is contained in a weak $\kappa$-model $M$ for
which there exists a $\kappa$-powerset preserving elementary
embedding $j:M\to N$ with the additional property that whenever $\la
A_n\mid n\in \omega\ra$ are subsets of $\kappa$ such that each
$A_n\in M$ and $\kappa\in j(A_n)$, then $\cap_{n\in\omega} A_n \neq
\emptyset$.
\end{theo}
Proposition \ref{prop:ultdef}(3) restated the characterization in
terms of the existence of $M$-ultrafilters. Again, we restate it
here for convenience.
\begin{pro}
A cardinal $\kappa$ is Ramsey if and only if every
$A\subseteq\kappa$ is contained in a weak $\kappa$-model $M$ for
which there exists a weakly amenable countably complete
$M$-ultrafilter on $\kappa$.
\end{pro}
In Theorem \ref{th:backwardram}, we proved the backward direction of
Proposition \ref{prop:ultdef}(3). In this section, we give a much
more involved proof of the forward direction, essentially following
\cite{dodd:coremodel}. We conclude the section with another
embedding characterization for Ramsey and strongly Ramsey cardinals.
\begin{definition}\label{d:good}
Suppose $\kappa$ is a cardinal and $A\subseteq\kappa$. Then
$I\subseteq\kappa$ is a \emph{good set of indiscernibles} for
$\lset{\kappa}$ if for all $\gamma\in I$:
\begin{itemize}
\item[(1)] $\lsetr{\gamma}\prec \lset{\kappa}$.
\item[(2)] $I\setminus\gamma$ is a set of indiscernibles for
$\lsetex{\kappa}_{\xi\in\gamma}$.
\end{itemize}
\end{definition}
It turns out that if $I$ is a good set of indiscernibles for
$\lset{\kappa}$, then every $\gamma\in I$ is inaccessible in
$L_\kappa[A]$. Thus, in particular, $L_\kappa[A]$ is the union of an
elementary chain of models of ZFC, and hence is itself a model of
ZFC.

\begin{lemma}\label{le:indiscernibles}
If $\kappa$ is Ramsey and $A\subseteq\kappa$, then $\lset{\kappa}$
has a good set of indiscernibles of size $\kappa$.
\end{lemma}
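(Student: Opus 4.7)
The strategy is to combine a continuous elementary chain with a clever application of the Ramsey partition property, and then argue that the only consistent homogeneity alternative is precisely good indiscernibility.

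First, using the standard continuous L\"owenheim--Skolem construction inside $\lset{\kappa}$, I obtain a club $C \subseteq \kappa$ of $\gamma$ with $\lset{\gamma} \prec \lset{\kappa}$; any $I \subseteq C$ then automatically satisfies clause (1) of Definition \ref{d:good}. Next, enumerate the countably many formulas $\la \varphi_i(\vec u, \vec v) : i < \omega\ra$ in the language $\{\in, A\}$, designating the $\vec u$ of length $k_i$ as parameter positions and $\vec v$ of length $n_i$ as indiscernible positions. For each $i$, consider the auxiliary formula
\[
\psi_i(v_0, \vec v, \vec w) \;:=\; \forall \vec u < v_0\,\bigl(\varphi_i(\vec u, \vec v) \leftrightarrow \varphi_i(\vec u, \vec w)\bigr),
\]
with $|\vec v|=|\vec w|=n_i$, and let $G_{\psi_i}: [\kappa]^{1 + 2n_i} \to 2$ be its truth partition in $\lset{\kappa}$. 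Combining the $G_{\psi_i}$ into a single coloring $F : [\kappa]^{<\omega} \to 2^{\omega}$ and using that $2^{\omega} < \kappa$ by inaccessibility, the partition relation $\kappa \to (\kappa)^{<\omega}_{<\kappa}$ (a standard consequence of $\kappa \to (\kappa)^{<\omega}_2$) produces a homogeneous $H$ of size $\kappa$. Set $I := H \cap C$, which still has cardinality $\kappa$.

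For clause (2), fix $\gamma \in I$ and $\varphi_i$. Homogeneity of $H$ means $\psi_i$ either holds on every increasing $(1+2n_i)$-tuple drawn from $I$ or on none. The ``none'' alternative would assign to each disjoint pair $\vec v < \vec w$ from $[I - \gamma]^{n_i}$ some $\vec u(\vec v, \vec w) \in \gamma^{k_i}$ making $\varphi_i$ disagree on the two tuples. Applying the partition relation to the resulting coloring of $[I - \gamma]^{2 n_i}$ valued in $\gamma^{k_i} < \kappa$ extracts a single $\vec u^* \in \gamma^{k_i}$ witnessing disagreement on every disjoint pair in some size-$\kappa$ subset; a further application to the two-valued coloring $\vec v \mapsto \varphi_i(\vec u^*, \vec v)$ then yields a size-$\kappa$ set on which $\varphi_i(\vec u^*, \cdot)$ is constant, contradicting the previous step. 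Hence the ``all'' alternative holds, which is precisely the required indiscernibility of $I - \gamma$ for $\lsetex{\kappa}_{\xi < \gamma}$; the case of overlapping $\vec v$ and $\vec w$ is handled by inserting a disjoint intermediate tuple from $I - \gamma$ and chaining the two disjoint instances.

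The main obstacle is engineering the coloring $F$ so that its homogeneity forces indiscernibility for \emph{all} parameters below $\gamma$, not merely parameters drawn from $I$. The passage through the auxiliary formulas $\psi_i$---which hide arbitrary parameters inside the bounded quantifier $\forall \vec u < v_0$---is the key device, and the iterated applications of $\kappa \to (\kappa)^{<\omega}_{<\kappa}$ to eliminate the bad homogeneity alternative are where the bulk of the work lies.
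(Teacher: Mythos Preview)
The paper does not prove this lemma itself; it simply refers the reader to Dodd's \emph{The Core Model}, p.~140, so there is no in-paper argument to compare against directly. Your argument is correct and is essentially the standard one found there and in other presentations: intersect with a club of $\gamma$ satisfying $\lset{\gamma}\prec\lset{\kappa}$ to secure clause~(1), package the parameter quantification into the auxiliary formulas $\psi_i$ so that a single application of $\kappa\to(\kappa)^{<\omega}_{<\kappa}$ yields a set homogeneous for all of them at once, and then rule out the constant-false alternative for each $\psi_i$ by two further uses of the partition relation---first to stabilize the witnessing parameter tuple $\vec u^{\,*}$, then to stabilize the truth value of $\varphi_i(\vec u^{\,*},\cdot)$, which forces a contradiction. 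The chaining through a disjoint intermediate tuple to handle overlapping $\vec v,\vec w$ is the usual finishing step.
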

For a proof, see \cite{dodd:coremodel} (Ch. 17). We are now ready to
prove that if $\kappa$ is Ramsey, then every $A\subseteq\kappa$ can
be put into a weak $\kappa$-model $M$ for which there exists a
weakly amenable countably complete $M$-ultrafilter on $\kappa$.
\begin{proof}[Proof of the forward direction of Proposition \ref{prop:ultdef}(3)]
Fix $A\subseteq\kappa$ and consider the structure
$\sa=\lset{\kappa}$. Note that $\sa$ has definable Skolem functions
since it has a definable well-ordering. By Lemma
\ref{le:indiscernibles}, $\sa$ has a good set indiscernibles $I$ of
size $\kappa$. For every $\gamma\in I$ and $n\in\omega$, let
$\arr{\gamma}_n$ denote the sequence
$\gamma_1<\gamma_2<\cdots<\gamma_n$ of the first $n$ indiscernibles
in $I$ above $\gamma$. Given $\gamma\in I$ and $n\in\omega$, let
$\ahat{\gamma}{n}=\la\mhat{\gamma}{n},A\cap
\mhat{\gamma}{n}\ra=Scl_{\sa}(\gamma+1\cup \arr{\gamma}_n)$ be the
Skolem closure using the definable Skolem functions of $\sa$. Since
$L_\kappa[A]$ is a model of ZFC, it satisfies that for every
$\lambda$, $H_\lambda$ exists and is a model of $\rm{ZFC}^-$. Since
$\ahat{\gamma}{n}\prec\sa$ and $\gamma\in \mhat{\gamma}{n}$, we have
$\her{\gamma}^{\sa}\in \mhat{\gamma}{n}$. Next, let
$\mh{\gamma}{n}=\mhat{\gamma}{n}\cap \her{\gamma}^{\sa}$ and
$\ah{\gamma}{n}=\la \mh{\gamma}{n},A\cap \mh{\gamma}{n}\ra$.
\begin{sublemma}
Each $\mh{\gamma}{n}$ is a transitive model of ${\rm ZFC}^-$.
\end{sublemma}
\begin{proof}
It is clear that $\mh{\gamma}{n}\models{\rm ZFC}^-$ since it is
precisely the $\her{\gamma}$ of $\ahat{\gamma}{n}$ and
$\ahat{\gamma}{n}$ knows by elementarity that $\her{\gamma}$
satisfies $\rm{ZFC}^-$.

To see that $\mh{\gamma}{n}$ is transitive, fix $a\in\mh{\gamma}{n}$
and $b\in a$. The set $a$ is an element of $\her{\gamma}^{\sa}$ and
is therefore coded by a subset of $\gamma\times\gamma$ in $\sa$. By
elementarity, $\mhat{\gamma}{n}$ contains a set
$E\subseteq\gamma\times\gamma$ coding $a$ and the Mostowski collapse
$\pi:\la \gamma,E\ra\to Trcl(a)$. Let $\alpha\in \gamma$ such that
$\sa\models \pi(\alpha)=b$. Since $\alpha<\gamma$, we have
$\alpha\in \mhat{\gamma}{n}$, and so $b\in \mhat{\gamma}{n}$ by
elementarity. Since clearly $b\in \her{\gamma}^{\sa}$, we have
$b\in\mh{\gamma}{n}$.
\end{proof}
\begin{sublemma}
For every $\gamma\in I$ and $n\in\omega$, we have
$\ah{\gamma}{n}\prec\ah{\gamma}{n+1}$.
\end{sublemma}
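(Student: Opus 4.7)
The plan is to leverage the Mostowski collapse machinery from the proof of the previous sublemma. The inclusion $\ah{\gamma}{n} \subseteq \ah{\gamma}{n+1}$ is immediate from $\ahat{\gamma}{n} \subseteq \ahat{\gamma}{n+1}$, so the only real work is to establish that this inclusion is in fact elementary.

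First I would note that since both $\ahat{\gamma}{n}$ and $\ahat{\gamma}{n+1}$ are Skolem closures inside $\sa$, they are each elementary substructures of $\sa$, and hence $\ahat{\gamma}{n}\prec\ahat{\gamma}{n+1}$. Letting $\pi_n:\ahat{\gamma}{n}\to N_n$ and $\pi_{n+1}:\ahat{\gamma}{n+1}\to N_{n+1}$ be the Mostowski collapses, the composition $\sigma=\pi_{n+1}\circ\pi_n^{-1}:N_n\to N_{n+1}$ is therefore an elementary embedding.

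Next I would argue that $\sigma$ fixes every element of $\ah{\gamma}{n}$. The reasoning is the one already used in the previous sublemma: since $\ah{\gamma}{n}$ is transitive and contained in $\ahat{\gamma}{n}$, every $a\in\ah{\gamma}{n}$ has its transitive closure inside $\ahat{\gamma}{n}$, so $\pi_n(a)=a$ by $\in$-recursion, and by the same reasoning applied to $\ahat{\gamma}{n+1}$ we also have $\pi_{n+1}(a)=a$. Hence $\sigma(a)=a$ for all $a\in\ah{\gamma}{n}$; in particular $\sigma(\gamma)=\gamma$.

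The final step is to apply elementarity of $\sigma$ to the defined class $\her{\gamma}$. The previous sublemma showed that $N_n$ views $\ah{\gamma}{n}$ as its $\her{\gamma}$, and similarly $N_{n+1}$ views $\ah{\gamma}{n+1}$ as its $\her{\gamma}$. Since $\sigma$ fixes $\gamma$ and every tuple $\vec{a}\in\ah{\gamma}{n}$, for any formula $\varphi$ I then obtain
\[\ah{\gamma}{n}\models\varphi(\vec{a})\iff N_n\models(\her{\gamma}\models\varphi(\vec{a}))\iff N_{n+1}\models(\her{\gamma}\models\varphi(\vec{a}))\iff\ah{\gamma}{n+1}\models\varphi(\vec{a}),\]
where the outer equivalences use absoluteness of set-sized satisfaction between a transitive set and an ambient transitive model of ${\rm ZFC}^-$. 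This yields $\ah{\gamma}{n}\prec\ah{\gamma}{n+1}$. The only real obstacle is bookkeeping: verifying that the two Mostowski collapses cohere via $\sigma$ and genuinely pointwise-fix $\ah{\gamma}{n}$. No further use of indiscernibility is required here, since the good-indiscernibles hypothesis on $I$ has already done its work in the previous sublemma by making $\ah{\gamma}{n}$ transitive and identifying it as $\her{\gamma}^{N_n}$.
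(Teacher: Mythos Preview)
Your proof is correct and follows essentially the same approach as the paper's: both use that $\ahat{\gamma}{n}\prec\ahat{\gamma}{n+1}$, pass to the Mostowski collapses $N_n$ and $N_{n+1}$, and exploit that $\ah{\gamma}{n}=\her{\gamma}^{N_n}$ and $\ah{\gamma}{n+1}=\her{\gamma}^{N_{n+1}}$. The paper's argument is terse (it stops at ``it suffices to observe'' these identifications), whereas you have spelled out the intermediate step---constructing the factor embedding $\sigma=\pi_{n+1}\circ\pi_n^{-1}$, verifying it is the identity on the transitive part $\ah{\gamma}{n}$, and transferring satisfaction through $\her{\gamma}$---but the underlying mechanism is identical.
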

\begin{proof}
It suffices to observe that
$\ahat{\gamma}{n}\prec\ahat{\gamma}{n+1}$ together with the fact
that $\ah{\gamma}{n}=\la\her{\gamma}^{\ahat{\gamma}{n}},A\cap
\her{\gamma}^{\ahat{\gamma}{n}}\ra$ and
$\ah{\gamma}{n+1}=\la\her{\gamma}^{\ahat{\gamma}{n+1}},A\cap\her{\gamma}^{\ahat{\gamma}{n+1}}\ra$.
\end{proof}
Recall that if $a\in\mhat{\gamma}{n}$, then
$a=h(\xi_0,\ldots,\xi_m,\gamma,\arr{\gamma}_n)$ where $h$ is a
definable Skolem function, $\xi_i\in\gamma$, and $\arr{\gamma}_n$
are the first $n$ indiscernibles above $\gamma$ in $I$. Given
$\gamma<\delta\in I$, define
$\map{\gamma}{\delta}{n}:\mhat{\gamma}{n}\to \mhat{\delta}{n}$ by
$\map{\gamma}{\delta}{n}(a)=h(\xi_0,\ldots,\xi_m,\delta,\arr{\delta}
_n)$ where $a=h(\xi_0,\ldots,\xi_m,\gamma,\arr{\gamma}_n)$ is as
above. Observe that since $I\setminus\gamma$ are indiscernibles for
$\lsetex{\kappa}_{\xi\in\gamma}$ (Definition \ref{d:good}(2)), the
map $\map{\gamma}{\delta}{n}$ is clearly well-defined. It is,
moreover, an elementary embedding of the structure
$\ahat{\gamma}{n}$ into $\ahat{\delta}{n}$. Since
$\map{\gamma}{\delta}{n}(\gamma)=\delta$ and
$\map{\gamma}{\delta}{n}(\xi)=\xi$ for all $\xi<\gamma$, the
critical point of $\map{\gamma}{\delta}{n}$ is $\gamma$. Finally,
note that for all $\gamma<\delta<\beta\in I$, we have
$\map{\gamma}{\beta}{n}\circ\map{\beta}{\delta}{n}=
\map{\gamma}{\delta}{n}$.
\begin{sublemma}
The map
$\map{\gamma}{\delta}{n}\upharpoonright\mh{\gamma}{n}:\mh{\gamma}{n}
\to\mh{\delta}{n}$ is an elementary embedding of the structure
$\ah{\gamma}{n}$ into $\ah{\delta}{n}$.
\end{sublemma}
\begin{proof}
Fix $a\in \mh{\gamma}{n}$ and recall that $\mhat{\gamma}{n}$ thinks
$a\in\her{\gamma}$. By elementarity of the
$\map{\gamma}{\delta}{n}$, it follows that $\mhat{\delta}{n}$ thinks
$\map{\gamma}{\delta}{n}(a)\in \her{\delta}$. Therefore
$\map{\gamma}{\delta}{n}:\mh{\gamma}{n}\to\mh{\delta}{n}$.
Elementarity follows as in the previous lemma.
\end{proof}
For $\gamma\in I$, define $\ult{\gamma}{n}=\{X\in \Power(\gamma)^{
\ah{\gamma}{n}}\mid \gamma\in\map{\gamma}{\delta}{n}(X)\text{ for
some }\delta>\gamma\}$. Equivalently, we could have used ``for
\emph{all} $\delta>\gamma$" in the definition.
\begin{sublemma}
$\ult{\gamma}{n}$ is an $\mh{\gamma}{n}$-ultrafilter on $\gamma$.
\end{sublemma}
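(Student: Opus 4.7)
The plan is to run the standard seed-theoretic argument using the elementary embeddings $\map{\gamma}{\delta}{n}:\ah{\gamma}{n}\to\ah{\delta}{n}$, each of which has critical point $\gamma$. Because $\ult{\gamma}{n}$ is defined in the style ``$\gamma$ is in the image of $X$'', its ultrafilter and normality properties will be immediate consequences of the elementarity of the $\map{\gamma}{\delta}{n}$.

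First I would verify the parenthetical claim that ``for some $\delta>\gamma$ in $I$'' can be replaced by ``for all $\delta>\gamma$ in $I$''. Given $\gamma<\delta_1<\delta_2$ in $I$, the identity $\map{\gamma}{\delta_2}{n}=\map{\delta_1}{\delta_2}{n}\circ\map{\gamma}{\delta_1}{n}$ combined with the fact that $\gamma$ lies below the critical point $\delta_1$ of $\map{\delta_1}{\delta_2}{n}$ shows, by elementarity of $\map{\delta_1}{\delta_2}{n}$, that $\gamma\in\map{\gamma}{\delta_1}{n}(X)\Leftrightarrow\gamma\in\map{\gamma}{\delta_2}{n}(X)$. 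Fix once and for all some $\delta>\gamma$ in $I$ and write $f=\map{\gamma}{\delta}{n}$.

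Next I would check the ultrafilter axioms relative to $\ah{\gamma}{n}$. Upward closure ($X\subseteq Y$ in $\ah{\gamma}{n}$ with $X\in\ult{\gamma}{n}$ implies $Y\in\ult{\gamma}{n}$) and closure under pairwise intersection both follow because $f$ preserves $\subseteq$ and $\cap$ by elementarity. For the ultrafilter dichotomy, fix $X\in\Power(\gamma)\cap\ah{\gamma}{n}$. Then $\gamma\setminus X\in\ah{\gamma}{n}$, and $f(\gamma\setminus X)=f(\gamma)\setminus f(X)=\delta\setminus f(X)$; since $\gamma<\delta$, exactly one of $\gamma\in f(X)$ or $\gamma\in\delta\setminus f(X)$ holds, giving membership of exactly one of $X,\gamma\setminus X$ in $\ult{\gamma}{n}$. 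Note also that $\gamma\in f(\gamma)=\delta$, so $\gamma\in\ult{\gamma}{n}$, witnessing nontriviality.

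For normality, suppose $g\in\ah{\gamma}{n}$ is a regressive function $g:X\to\gamma$ with $X\in\ult{\gamma}{n}$. Then $f(g):f(X)\to\delta$ is regressive by elementarity, and $\gamma\in f(X)$, so $f(g)(\gamma)=\beta$ for some $\beta<\gamma$. Since $\beta<\gamma$ is fixed by $f$, elementarity applied to $\{\xi\in X\mid g(\xi)=\beta\}\in\ah{\gamma}{n}$ yields $f$-image $\{\xi\in f(X)\mid f(g)(\xi)=\beta\}$, which contains $\gamma$; thus this level set lies in $\ult{\gamma}{n}$. Finally, to conclude $\langle\ah{\gamma}{n},\ult{\gamma}{n}\rangle\models\text{``}\ult{\gamma}{n}\text{ is a normal ultrafilter''}$, I would observe that all of the quantifiers in the defining formulas range over elements of $\ah{\gamma}{n}$, and each instance has been verified above. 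The main technical point to be careful about is that $\ult{\gamma}{n}$ itself is not assumed to lie in $\ah{\gamma}{n}$; this is why the definition uses $M$-ultrafilter in the sense of the predicate $\langle\ah{\gamma}{n},\ult{\gamma}{n}\rangle$ rather than requiring any internal definability, and no step above needed such definability.
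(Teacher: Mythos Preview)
Your proposal is correct and is exactly the standard seed argument the paper has in mind; the paper's own proof consists of the single word ``Easy.'' Your write-up simply unpacks that word, and every step (the ``some $\delta$'' versus ``all $\delta$'' equivalence via the commuting triangle, the ultrafilter axioms via elementarity of $\map{\gamma}{\delta}{n}$, and normality via $f(g)(\gamma)<\gamma$) is the intended verification.
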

\begin{proof}
Easy.
\end{proof}
Observe that if $a_0,\ldots,a_n\in \lone{\gamma}$ for some
$\gamma\in I$, then for every formula $\varphi(\arr{x})$, we have
$\lset{\kappa}\models\varphi(\arr{a})\leftrightarrow
\lsetr{\gamma}\models\varphi(\arr{a})\leftrightarrow\lset{\kappa}
\models``\lsetr{\gamma}\models \varphi(\arr{a})"$. It follows that
for every $\gamma\in I$, the model $\lset{\kappa}$ has a truth
predicate for formulas with parameters from $\lone{\gamma}$ that is
definable with $\gamma$ as a parameter.
\begin{sublemma}
The $\mh{\gamma}{n}$-ultrafilter $\ult{\gamma}{n}$ is an element of
$\mh{\gamma}{n+2}$.
\end{sublemma}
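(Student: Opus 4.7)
The plan is to exploit the truth predicate observation stated just above the sublemma: for each $\delta\in I$, the model $\sa=\lset{\kappa}$ carries a truth predicate $T_\delta$, definable from $\delta$, evaluating formulas with parameters from $\lone{\delta}$. I will use $T_{\gamma_{n+2}}$ as the engine that pulls $\ult{\gamma}{n}$ into $\ahat{\gamma}{n+2}$. Once I show $\ult{\gamma}{n}\in\ahat{\gamma}{n+2}$, I note that $\ult{\gamma}{n}\subseteq\Power(\gamma)\cap\ah{\gamma}{n}$ has cardinality at most $\gamma$, so it sits in $\her{\gamma}^{\sa}$ and hence in $\ah{\gamma}{n+2}=\ahat{\gamma}{n+2}\cap\her{\gamma}^{\sa}$.

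The first step is to normalize the defining condition. By the indiscernibility of $I-\gamma$ for $\lsetex{\kappa}_{\xi\in\gamma}$, the statement ``$\gamma\in\map{\gamma}{\delta}{n}(X)$ for some $\delta>\gamma$ in $I$'' does not depend on the choice of $\delta$, so I may take $\delta=\gamma_1$. Thus $X\in\ult{\gamma}{n}$ iff $X\in\Power(\gamma)\cap\ah{\gamma}{n}$ and $\gamma\in\map{\gamma}{\gamma_1}{n}(X)$. The second step is to unravel this in terms of Skolem terms: every element of $\ahat{\gamma}{n}$ has the form $h(\arr{b},\gamma,\gamma_1,\ldots,\gamma_n)$ for a definable Skolem function $h$ of $\sa$ and a finite tuple $\arr{b}\subseteq\gamma$, and by the definition of $\map{\gamma}{\gamma_1}{n}$, its image is $h(\arr{b},\gamma_1,\gamma_2,\ldots,\gamma_{n+1})$. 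All parameters occurring in these two Skolem values lie in $\lone{\gamma_{n+2}}$, so using $T_{\gamma_{n+2}}$ I can write ``$X\in\ult{\gamma}{n}$'' as a predicate in $\sa$ that quantifies existentially over Skolem terms $h$ and tuples $\arr{b}\subseteq\gamma$, asserting via $T_{\gamma_{n+2}}$ both the representation of $X$ and the membership $\gamma\in h(\arr{b},\gamma_1,\ldots,\gamma_{n+1})$. The parameters of the resulting formula are $\gamma,\gamma_1,\ldots,\gamma_{n+2}$ together with bounded data from $\gamma$, all elements of $\ahat{\gamma}{n+2}$.

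The main obstacle is verifying that these syntactic manipulations with Skolem terms are genuinely uniformly expressible inside $\sa$. This is exactly where goodness of the indiscernibles pays off: $\gamma_{n+2}\in I$ gives $\lset{\gamma_{n+2}}\prec\sa$, so the Skolem values of terms with parameters in $\lone{\gamma_{n+2}}$ are absolute between $\lset{\gamma_{n+2}}$ and $\sa$ and can be read off through $T_{\gamma_{n+2}}$. The jump from $n$ to $n+2$ in the statement is forced exactly by this bookkeeping: one extra indiscernible $\gamma_{n+1}$ is needed because $\map{\gamma}{\gamma_1}{n}$ shifts by one, and a further indiscernible $\gamma_{n+2}$ is needed to host the truth predicate that evaluates formulas mentioning $\gamma_{n+1}$.
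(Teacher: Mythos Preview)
Your proposal is correct and follows essentially the same route as the paper: express $\ult{\gamma}{n}$ by a formula that quantifies over Skolem terms $h$ and tuples $\arr{b}\subseteq\gamma$, asserting $X=h(\arr{b},\gamma,\gamma_1,\ldots,\gamma_n)$ and $\gamma\in h(\arr{b},\gamma_1,\ldots,\gamma_{n+1})$, evaluate this via the truth predicate for $\lone{\gamma_{n+2}}$ (available since $\gamma_{n+2}\in\ahat{\gamma}{n+2}$), and then drop into $\ah{\gamma}{n+2}$ by observing $\ult{\gamma}{n}\in\her{\gamma}^{\sa}$. Your explicit bookkeeping of why exactly two extra indiscernibles are needed---one for the shift in $\map{\gamma}{\gamma_1}{n}$ and one to host the truth predicate---is a nice gloss, but the argument is the paper's.
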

\begin{proof}
In $\ahat{\gamma}{n+2}$, we have $\ult{\gamma}{n}=\{x\in
\Power(\gamma)\mid \exists h\,\exists\xi_0,\ldots,\xi_m<\gamma\,\,h$
is a Skolem term and
$h=(\xi_0,\ldots,\xi_m,\gamma,\gamma_1,\ldots,\gamma_n)\wedge
 \gamma\in
h(\xi_0,\ldots,\xi_m,\gamma_1,\gamma_2,\ldots,\gamma_{n+1})\}$. This
follows since $\gamma_{n+2}\in \mhat{\gamma}{n+2}$, and therefore we
can define a truth predicate for $\lone{\gamma_{n+2}}$, which is
good enough for the definition above. So far we have shown that
$\ult{\gamma}{n}$ is in $\mhat{\gamma}{n+2}$, but obviously
$\ult{\gamma}{n}\in \her{\gamma}^{\sa}$, and therefore
$\ult{\gamma}{n}\in \mh{\gamma}{n+2}$.
\end{proof}
It should be clear that $\ult{\gamma}{n}\subseteq\ult{\gamma}{n+1}$
and $\map{\gamma}{\delta}{n}\subseteq\map{\gamma}{\delta}{n+1}$. Let
$\mhh{\gamma}=\cup_{n\in\omega}\mh{\gamma}{n}$ and $\ahh{\gamma}=\la
\mhh{\gamma},A\cap \mhh{\gamma}\ra$. Note that $\mhh{\gamma}$ is a
transitive model of $\rm{ZFC}^-$. Let
$U_\gamma=\cup_{n\in\omega}\ult{\gamma}{n}$ and
$\ma{\gamma}{\delta}=\cup_{n\in\omega}\map{\gamma}{\delta}{n}:
\mhh{\gamma}\to\mhh{\delta}$. The map $\ma{\gamma}{\delta}$ is an
elementary embedding from the structure $\ahh{\gamma}$ into
$\ahh{\delta}$ and $U_\gamma$ is an $\mhh{\gamma}$-ultrafilter on
$\gamma$.
\begin{sublemma}
The $\mhh{\gamma}$-ultrafilter $U_\gamma$ is weakly amenable.
\end{sublemma}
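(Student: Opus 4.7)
The plan is to reduce the question of whether $A_\alpha\in U_\gamma$, for $A\in\ahh{\gamma}$ and $\alpha<\gamma$, to a question about membership in a single $\ult{\gamma}{n}$, and then invoke the fact already proved that $\ult{\gamma}{n}\in\ah{\gamma}{n+2}$. The main observation will be that the $\ult{\gamma}{n}$'s form a coherent sequence, in the sense that for $X\in\Power(\gamma)\cap\ah{\gamma}{n}$ we have $X\in\ult{\gamma}{n}$ iff $X\in\ult{\gamma}{m}$ for any (equivalently all) $m\geq n$.

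First I would fix $A\subseteq\gamma\times\gamma$ in $\ahh{\gamma}$ and pick $n$ with $A\in\ah{\gamma}{n}$; then each section $A_\alpha$ (for $\alpha<\gamma$) lies in $\Power(\gamma)\cap\ah{\gamma}{n}$. Next, I would verify the coherence claim: for such $X$, $X\in\ult{\gamma}{m}$ means $\gamma\in\map{\gamma}{\delta}{m}(X)$ for some $\delta>\gamma$, and since $\map{\gamma}{\delta}{n}\subseteq\map{\gamma}{\delta}{m}$ whenever $m\geq n$, the values $\map{\gamma}{\delta}{m}(X)$ and $\map{\gamma}{\delta}{n}(X)$ agree on inputs from $\ah{\gamma}{n}$. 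Consequently $X\in U_\gamma$ iff $X\in\ult{\gamma}{n}$ for every $X\in\Power(\gamma)\cap\ah{\gamma}{n}$. Applying this with $X=A_\alpha$, the set we want to produce becomes
\[
B\;=\;\{\alpha<\gamma\mid A_\alpha\in U_\gamma\}\;=\;\{\alpha<\gamma\mid A_\alpha\in\ult{\gamma}{n}\}.
\]

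Finally, I would observe that the right-hand description of $B$ is definable from the parameters $A$ and $\ult{\gamma}{n}$. Since $A\in\ah{\gamma}{n}\subseteq\ah{\gamma}{n+2}$ and we already know $\ult{\gamma}{n}\in\ah{\gamma}{n+2}$, and since $\la\ah{\gamma}{n+2},A\ra\models{\rm ZFC}^-$ (so it has the comprehension needed to form $B$), we get $B\in\ah{\gamma}{n+2}\subseteq\ahh{\gamma}$, which is exactly the weak amenability of $U_\gamma$.

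The only real obstacle is the coherence step; everything else is essentially unpacking definitions. If the inclusion $\map{\gamma}{\delta}{n}\subseteq\map{\gamma}{\delta}{n+1}$ is read off directly from the Skolem-term definition of $\map{\gamma}{\delta}{n}$ (both maps send $h(\vec b,\gamma,\vec\gamma)$ to $h(\vec b,\delta,\vec\delta)$, and enlarging $n$ just appends more indiscernibles that are not mentioned by the term), then coherence is immediate and the proof is just the three lines above.
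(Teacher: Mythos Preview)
Your proposal is correct and follows essentially the same route as the paper: pick $n$ with $A\in\ah{\gamma}{n}$, reduce $\{\alpha<\gamma\mid A_\alpha\in U_\gamma\}$ to $\{\alpha<\gamma\mid A_\alpha\in\ult{\gamma}{n}\}$, and use $\ult{\gamma}{n}\in\ah{\gamma}{n+2}$ together with comprehension in $\ah{\gamma}{n+2}$. The only difference is that you spell out the coherence step (that $X\in U_\gamma\leftrightarrow X\in\ult{\gamma}{n}$ for $X\in\Power(\gamma)\cap\ah{\gamma}{n}$), which the paper leaves implicit after having noted $\ult{\gamma}{n}\subseteq\ult{\gamma}{n+1}$ and $\map{\gamma}{\delta}{n}\subseteq\map{\gamma}{\delta}{n+1}$.
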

\begin{proof}
Fix a sequence $\la B_\alpha\mid \alpha\in\gamma\ra$ in
$\mhh{\gamma}$ of subsets of $\gamma$. We need to show that
$C=\{\xi\in\gamma\mid B_\xi\in U_\gamma\}$ is an element of
$\mhh{\gamma}$. Since $\la
B_\alpha\mid\alpha<\kappa\ra\in\mhh{\gamma}$, it follows that $\la
B_\alpha\mid\alpha<\kappa\ra\in\mh{\gamma}{n}$ for some
$n\in\omega$. But then $C=\{\xi\in\gamma\mid
B_\xi\in\ult{\gamma}{n}\}$ and $\ult{\gamma}{n}\in
\mh{\gamma}{n+2}\subseteq\mhh{\gamma}$.
\end{proof}
Now for every $\gamma\in I$, we have an associated structure
$\la\mhh{\gamma},\in,A\cap \mhh{\gamma},U_\gamma\ra$. Also, if
$\gamma<\delta$ in $I$, we have an elementary embedding
$\ma{\gamma}{\delta}:\mhh{\gamma}\to\mhh{\delta}$ with critical
point $\gamma$ between the structures $ \ahh{\gamma}$ and $
\ahh{\delta}$ such that $X\in U_\gamma$ if and only if
$\ma{\gamma}{\delta}(X)\in U_\delta$. This is a directed system of
embeddings, and so we can take its direct limit. Define $\la B,E,
A', V\ra=\lim_{\gamma\in I}\la \mhh{\gamma},\in, A\cap\mhh{\gamma},
U_\gamma\ra$.
\begin{sublemma}\label{le:well-founded}
The relation $E$ on $B$ is well-founded.
\end{sublemma}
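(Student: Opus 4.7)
The plan is to run the standard direct-limit well-foundedness argument, exploiting that each $\ahh{\gamma}$ is transitive and that the index set $I$ is cofinal in the regular cardinal $\kappa$. Suppose toward a contradiction that there is an infinite $E$-descending sequence $\ldots E\, a_2\, E\, a_1\, E\, a_0$ in $B$. Each element of the direct limit is represented by a pair $(\gamma,x)$ with $\gamma\in I$ and $x\in\ahh{\gamma}$, so I fix representatives $a_n = [\gamma_n, x_n]$ once and for all.

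Next, I would produce a single index $\gamma^*\in I$ above all the $\gamma_n$. Since $\kappa$ is Ramsey it is in particular regular (indeed inaccessible), and since $|I|=\kappa$ with $I\subseteq\kappa$, the set $I$ must be cofinal in $\kappa$; otherwise $I$ would be bounded by some $\alpha<\kappa$ and have size at most $|\alpha|<\kappa$. The countable set $\{\gamma_n\mid n\in\omega\}$ therefore has a strict upper bound $\gamma^*\in I$ by regularity of $\kappa$. Set $y_n = \ma{\gamma_n}{\gamma^*}(x_n)\in\ahh{\gamma^*}$.

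By the definition of $E$ on the direct limit together with the coherence relation $\ma{\beta}{\gamma^*}\circ \ma{\gamma_n}{\beta}=\ma{\gamma_n}{\gamma^*}$ of the system (which is immediate from the pointwise formula $\ma{\gamma}{\delta}(h(\arr b,\gamma,\arr\gamma))=h(\arr b,\delta,\arr\delta)$), the relation $a_{n+1}\, E\, a_n$ translates to $y_{n+1}\in y_n$ in $\ahh{\gamma^*}$: one pushes both $x_n$ and $x_{n+1}$ up to a common $\ahh{\beta}$ with $\beta\in I$ large enough that $E$ is witnessed there, and then applies $\ma{\beta}{\gamma^*}$, which is $\in$-preserving. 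Hence we obtain an infinite $\in$-descending chain $\ldots \in y_2 \in y_1 \in y_0$ inside $\ahh{\gamma^*}$.

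Finally, $\ahh{\gamma^*}=\cup_{n\in\omega}\ah{\gamma^*}{n}$ is transitive, since each $\ah{\gamma^*}{n}$ was shown to be transitive in an earlier sublemma and the chain is nested. Therefore the $\in$-relation on $\ahh{\gamma^*}$ is genuine set membership on a transitive set, and so is well-founded — contradicting the chain $\ldots\in y_2\in y_1\in y_0$. The only point that requires care is confirming cofinality of $I$ in $\kappa$ and the commutativity of the maps $\ma{\gamma}{\delta}$; both are routine, so no serious obstacle is expected.
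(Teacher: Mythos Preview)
Your argument is correct and is essentially the same as the paper's: both suppose a descending $E$-chain, push all representatives into a single $\ahh{\gamma}$ for some $\gamma\in I$ above the countably many indices involved (using that $|I|=\kappa$ and $\kappa$ is regular), and obtain an $\in$-descending chain in a transitive set. The only cosmetic difference is that the paper represents direct-limit elements as threads $t$ with $t(\gamma)\in\ahh{\gamma}$ rather than as equivalence classes of pairs, but the underlying idea is identical.
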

\begin{proof}
The elements of $B$ are functions $t$ with domains $\{\xi\in I\mid
\xi\geq\alpha\}$ for some $\alpha\in I$ satisfying the properties:
\begin{itemize}
\item[(1)] $t(\gamma)\in \mhh{\gamma}$,
\item[(2)] for $\gamma<\delta$ in domain of $t$, we have
$t(\delta)=\ma{\gamma}{\delta}(t(\gamma))$,
\item[(3)] there is no $\xi\in I\cap\alpha$ for which there is
$a\in \mhh{\xi}$ such that  $\ma{\xi}{\alpha}(a)=t(a)$.
\end{itemize}
Note that each $t$ is determined once you know any $t(\xi)$ by
extending uniquely forward and backward. Standard arguments (for
example, \cite{jech:settheory}, Ch. 12) show that $\la
B,E,A'\ra\models \varphi(t_1,\ldots,t_n)\leftrightarrow\exists
\gamma\,\ahh{\gamma}\models \varphi(t_1(\gamma),\ldots,t_n(\gamma))
\leftrightarrow$ for all $\gamma$ in the intersection of the domains
of the $t_i$, the structure
$\ahh{\gamma}\models\varphi(t_1(\gamma),\ldots,t_n(\gamma))$. Note
that this truth definition holds only of atomic formulas where the
formulas involve the predicate for the ultrafilter.

Suppose to the contrary that $E$ is not well-founded, then there is
a descending $E$-sequence $\cdots E\, t_n\, E\cdots E \,t_1 \,E\,
t_0$. Find $\gamma_0$ such that $\ahh{\gamma_0}\models
t_1(\gamma_0)\in t_0(\gamma_0)$. Next, find $\gamma_1>\gamma_0$ such
that $\ahh{\gamma_1}\models t_2(\gamma_1)\in t_1(\gamma_1)$. In this
fashion, define an increasing sequence
$\gamma_0<\gamma_1<\cdots<\gamma_n<\cdots$ such that
$\ahh{\gamma_n}\models t_{n+1}(\gamma_n)\in t_n(\gamma_n)$. Let
$\gamma\in I$ such that $\gamma>\text{sup}_{n\in\omega}\gamma_n$. It
follows that for all $n\in\omega$, the structure\break
$\ahh{\gamma}\models
\ma{\gamma_n}{\gamma}(t_{n+1}(\gamma_n))\in\ma{\gamma_n}{\gamma}
(t_n(\gamma_n))$, and therefore $\ahh{\gamma}\models
t_{n+1}(\gamma)\in t_n(\gamma)$. But, of course, this is impossible.
Thus, $E$ is well-founded.
\end{proof}
Let $\la M,\in, A^*, U\ra$ be the Mostowski collapse of $\la B,E,
A', V\ra$.
\begin{sublemma}
The cardinal $\kappa$ is an element of  $M$.
\end{sublemma}
\begin{proof}
Fix $\alpha\in\kappa$ and let $\gamma$ be the least ordinal in $I$
above $\alpha$. Let $t_\alpha$ have domain $\{\xi\in I\mid
\xi\geq\gamma\}$ with $t_\alpha(\xi)=\alpha$. The function
$t_\alpha$ is an element of $B$ which collapses to $\alpha$. Let
$t_\kappa$ have domain $I$ with $t_\kappa(\gamma)=\gamma$. The
function $t_\kappa$ is an element of $B$ which collapses to
$\kappa$.
\end{proof}
Let $j_\gamma:\mhh{\gamma}\to M$ such that $j_\gamma(a)$ is the
collapse of the function $t$ for which $t(\gamma)=a$. The map
$j_\gamma$ is an elementary embedding of the structure
$\ahh{\gamma}$ into  $\la M,\in,A^*\ra$. It is, moreover, elementary
for atomic formulas in the language with the predicate for the
ultrafilter. Observe that $j_\gamma(\xi)=\xi$ for all $\xi<\gamma$
since if $t(\gamma)=\xi$, then $t=t_\xi$. Also,
$j_\gamma(\gamma)=\kappa$ since if $t(\gamma)=\gamma$, then
$t=t_\kappa$. So the critical point of each $j_\gamma$ is $\kappa$.
Finally, if $\gamma<\delta$ in $I$, then $j_\delta\circ
\ma{\gamma}{\delta}=j_\gamma$.
\begin{sublemma}
The set $U$ is a weakly amenable $M$-ultrafilter on $\kappa$.
\end{sublemma}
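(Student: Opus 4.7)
I plan to verify three properties of $U_\kappa$: it is an $\ahh{\kappa}$-ultrafilter on $\kappa$, it is normal, and it is weakly amenable. The main tool is the family of embeddings $j_\gamma : \ahh{\gamma} \to \ahh{\kappa}$, which is fully elementary in the base language $(\in, A)$ and elementary for atomic formulas in the ultrafilter predicate $U$, together with the direct-limit \L o\'{s}-style characterization already used in Sublemma \ref{le:well-founded}: truth of a base-language formula or an atomic $U$-formula at $\ahh{\kappa}$ equals truth at $\ahh{\delta}$ for all sufficiently large $\delta \in I$.

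For the ultrafilter and normality properties, fix $X \in \Power(\kappa) \cap \ahh{\kappa}$ and write $X = j_\gamma(x)$ for some $\gamma \in I$ and $x \in \ahh{\gamma}$; base-language elementarity together with $j_\gamma(\gamma) = \kappa$ forces $x \subseteq \gamma$. Atomic $U$-elementarity of $j_\gamma$ transfers the ultrafilter dichotomy ($x \in U_\gamma$ or $\gamma \setminus x \in U_\gamma$ becomes $X \in U_\kappa$ or $\kappa \setminus X \in U_\kappa$), closure under finite intersection and supersets, and normality: a regressive $f : A \to \kappa$ in $\ahh{\kappa}$ with $A \in U_\kappa$ pulls back to a regressive $f_0 : A_0 \to \gamma$ in $\ahh{\gamma}$ with $A_0 \in U_\gamma$, and the normality witness $B_0 \in U_\gamma$ on which $f_0$ is constant maps forward under $j_\gamma$ to $j_\gamma(B_0) \in U_\kappa$ on which $f$ is constant.

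For weak amenability, fix $B \in \ahh{\kappa}$ with $B \subseteq \kappa \times \kappa$, write $B = j_\gamma(b)$, and by the previous sublemma set $C = \{\xi < \gamma : b_\xi \in U_\gamma\} \in \ahh{\gamma}$. I claim $j_\gamma(C) = \{\xi < \kappa : B_\xi \in U_\kappa\}$, which would provide the weak-amenability witness in $\ahh{\kappa}$. For each $\xi \in \ahh{\kappa}$, the equivalence ``$\xi \in j_\gamma(C) \leftrightarrow B_\xi \in U_\kappa$'' is a biconditional of atomic formulas (one in $\in$ and one in $U$), so by the direct-limit \L o\'{s} principle it reduces, via representatives in the direct limit, to the corresponding equivalence $\xi \in \ma{\gamma}{\delta}(C) \iff \ma{\gamma}{\delta}(b)_\xi \in U_\delta$ in $\ahh{\delta}$ for all sufficiently large $\delta \in I$ (using that $\xi$ is a fixed point of $j_\delta$ once $\delta > \xi$). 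For $\xi < \gamma$, atomic $U$-elementarity of $\ma{\gamma}{\delta}$ applied to the defining biconditional of $C$ yields this immediately.

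The main obstacle is the range $\gamma \leq \xi < \delta$, where the defining biconditional of $C$ is not available for direct transfer. My plan is to invoke weak amenability of $U_\delta$ (granted by the previous sublemma) to produce the unique $D^\delta = \{\eta < \delta : \ma{\gamma}{\delta}(b)_\eta \in U_\delta\} \in \ahh{\delta}$, to observe that $D^\delta$ and $\ma{\gamma}{\delta}(C)$ agree on the initial segment $\gamma$ by the atomic transfer already established, and then to exploit the coherence of the directed system (the maps $\ma{\delta}{\delta'}$ carry both $\ma{\gamma}{\delta}(C)$ and $D^\delta$ to corresponding data at higher levels, and atomic $U$-elementarity pins them down to within extensionality) to conclude $D^\delta = \ma{\gamma}{\delta}(C)$. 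This coherence argument, built on top of the specific direct-limit structure from the good indiscernibles, will be the delicate technical point of the proof.
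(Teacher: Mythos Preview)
Your argument for the ultrafilter and normality properties is fine. The gap is in weak amenability: the ``coherence argument'' you outline does not close. From atomic $U$-elementarity you obtain that $D^\delta$ and $\ma{\gamma}{\delta}(C)$ agree on $\gamma$, and passing to larger $\delta'\in I$ you can show $D^{\delta'}\cap\delta=D^\delta$ and $\ma{\gamma}{\delta'}(C)\cap\delta=\ma{\gamma}{\delta}(C)$; but this only propagates the agreement on $\gamma$ upward, never producing any information about the interval $[\gamma,\delta)$. Two distinct subsets of $\delta$ in $\ahh{\delta}$ that happen to agree below $\gamma$ can perfectly well remain distinct under all the system maps, so coherence alone cannot force $D^\delta=\ma{\gamma}{\delta}(C)$.

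What makes the sublemma ``easy'' in the paper's sense is that the weak-amenability witness $C$ is not an abstract element of $\ahh{\gamma}$: the proof of the previous sublemma produced it as $C=\{\xi<\gamma: b_\xi\in\ult{\gamma}{n}\}$ with $b\in\ah{\gamma}{n}$ and $\ult{\gamma}{n}\in\ah{\gamma}{n+2}$, a \emph{base-language} definition from the parameters $b,\gamma,\ult{\gamma}{n}$. The explicit description of $\ult{\gamma}{n}$ in $\ahat{\gamma}{n+2}$ via the indiscernibles $\gamma,\gamma_1,\ldots,\gamma_{n+2}$ (from the sublemma showing $\ult{\gamma}{n}\in\ah{\gamma}{n+2}$) gives $\ma{\gamma}{\delta}(\ult{\gamma}{n})=\ult{\delta}{n}$ immediately, since $\map{\gamma}{\delta}{n+2}$ just shifts those parameters to $\delta,\delta_1,\ldots,\delta_{n+2}$. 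Now full base-language elementarity of $\ma{\gamma}{\delta}$ yields $\ma{\gamma}{\delta}(C)=\{\xi<\delta:\ma{\gamma}{\delta}(b)_\xi\in\ult{\delta}{n}\}=D^\delta$ outright, and pushing through $j_\gamma$ gives $j_\gamma(C)=\{\xi<\kappa:B_\xi\in U_\kappa\}$. In short, once the levels $\ult{\gamma}{n}$ of the ultrafilters are internalized as elements carried correctly by the system maps, weak amenability becomes a base-language fact and transfers by ordinary elementarity; your attempt to get by with only the atomic $U$-predicate is what creates the obstacle.
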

\begin{proof}
Easy.
\end{proof}
\begin{sublemma}
A set $X\in U$ if and only if there exists $\alpha\in I$ such
that\break $\{\xi\in I\mid \xi>\alpha\}\subseteq X$.
\end{sublemma}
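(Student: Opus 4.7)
The plan is to chase the definitions through the direct limit construction and then exploit the indiscernibility of $I\setminus\gamma$ over parameters below $\gamma$. The forward direction identifies an explicit tail of $I$ inside $X$, and the backward direction recovers membership in some $U_\gamma$ from $\gamma$ itself lying in $X$.

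For the forward direction, suppose $X\in U_\kappa$. Unwinding the Mostowski collapse of the direct limit, $X$ is the collapse of a thread $t\in V$ with $t(\gamma)=Y\in U_\gamma$ for all sufficiently large $\gamma\in I$. Fix such a $\gamma$ and $n\in\omega$ with $Y\in\ult{\gamma}{n}$, and write $Y=h(\arr{b},\gamma,\gamma_1,\ldots,\gamma_n)$ where $h$ is a definable Skolem function of $\sa$, $\arr{b}<\gamma$, and $\gamma_1<\cdots<\gamma_n$ are the first $n$ elements of $I$ above $\gamma$. By definition of $\ult{\gamma}{n}$ we have, for every $\delta\in I$ with $\delta>\gamma$, $\gamma\in\map{\gamma}{\delta}{n}(Y)=h(\arr{b},\delta,\delta_1,\ldots,\delta_n)$. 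Because $I\setminus\gamma$ is a set of indiscernibles for $\lsetex{\kappa}_{\xi\in\gamma}$ (so in particular over the parameters $\arr{b}$), the $(n{+}2)$-ary $\sa$-formula $\eta_0\in h(\arr{b},\eta_1,\ldots,\eta_{n+1})$ receives the same truth value on every increasing tuple from $I\setminus\gamma$. Therefore, for any $\xi\in I$ with $\xi>\gamma$ and any $\delta\in I$ with $\delta>\xi$, we get $\xi\in h(\arr{b},\delta,\delta_1,\ldots,\delta_n)=\map{\gamma}{\delta}{n}(Y)=t(\delta)$. Translating back through the limit, this says precisely $t_\xi\,E\,t$, i.e., $\xi\in X$ in $\ahh{\kappa}$. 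So $\alpha:=\gamma$ works.

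For the backward direction, suppose there exists $\alpha\in I$ with $\{\xi\in I\mid\xi>\alpha\}\subseteq X$. Pick any $\gamma\in I$ with $\gamma>\alpha$ and such that $X$ is represented by a thread $t$ whose domain contains $\gamma$; set $Y=t(\gamma)\in\ahh{\gamma}$, so that $Y\in\ah{\gamma}{n}$ for some $n\in\omega$. Since $\gamma\in I$ and $\gamma>\alpha$, the hypothesis gives $\gamma\in X$, i.e., $t_\gamma\,E\,t$. For every $\delta\in I$ with $\delta>\gamma$ we have $t_\gamma(\delta)=\gamma$ and $t(\delta)=\ma{\gamma}{\delta}(Y)=\map{\gamma}{\delta}{n}(Y)$, so $\gamma\in\map{\gamma}{\delta}{n}(Y)$. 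This is exactly the witness required to place $Y$ in $\ult{\gamma}{n}\subseteq U_\gamma$, and hence $X=j_\gamma(Y)\in U_\kappa$.

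The only nontrivial step is the indiscernibility shift in the forward direction: one must verify that writing $Y$ in the form $h(\arr{b},\gamma,\gamma_1,\ldots,\gamma_n)$ with $\arr{b}<\gamma$ is possible (by the definition of $\ah{\gamma}{n}$ as a Skolem closure), and that the statement ``$\eta_0\in h(\arr{b},\eta_1,\ldots,\eta_{n+1})$'' is genuinely an $\sa$-formula to which indiscernibility of $I\setminus\gamma$ applies over the parameters $\arr{b}$. Once this is justified, the rest is routine tracking of threads through the direct limit and its Mostowski collapse.
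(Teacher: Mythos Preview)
Your argument is correct. The backward direction is essentially the paper's: from $\gamma\in X$ you read off $\gamma\in \map{\gamma}{\delta}{n}(Y)$ via the threads $t_\gamma$ and $t$, which is exactly the defining condition for $Y\in\ult{\gamma}{n}\subseteq U_\gamma$, whence $X\in U_\kappa$.

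Your forward direction, however, takes a genuinely different route from the paper. The paper fixes $\beta\in I$ so that $X=j_\xi(X')$ for all $\xi>\beta$, and then proves the single biconditional
\[
X\in U_\kappa \;\Longleftrightarrow\; X'\in U_\xi \;\Longleftrightarrow\; \xi\in \ma{\xi}{\xi_1}(X') \;\Longleftrightarrow\; j_{\xi_1}(\xi)\in j_{\xi_1}\circ\ma{\xi}{\xi_1}(X')=j_\xi(X') \;\Longleftrightarrow\; \xi\in X,
\]
using only the commutativity $j_{\xi_1}\circ\ma{\xi}{\xi_1}=j_\xi$ and the fact that $j_{\xi_1}$ fixes $\xi<\xi_1$. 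This handles both directions at once and never appeals to indiscernibility explicitly. You instead keep $\gamma$ fixed, express $Y$ as a Skolem term $h(\arr{b},\gamma,\arr{\gamma})$, and invoke the indiscernibility of $I\setminus\gamma$ over $\arr{b}$ to shift the first coordinate from $\gamma$ to an arbitrary $\xi>\gamma$, obtaining $\xi\in\map{\gamma}{\delta}{n}(Y)=t(\delta)$ and hence $\xi\in X$. Your approach makes the role of the good-indiscernible hypothesis completely transparent and is perhaps more concrete; the paper's approach is a bit slicker, since it packages everything into the direct-limit embeddings and yields the equivalence $X\in U_\kappa\leftrightarrow \xi\in X$ (for $\xi>\beta$) in one stroke.
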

\begin{proof}
Fix $X\subseteq\kappa$ in $M$ and $\beta\in I$ such that for all
$\xi>\beta$, there is $X'\in \mhh{\xi}$ with $j_\xi(X')=X$. For
$\xi>\beta$, we have $X\in U\leftrightarrow X'\in U_\xi
\leftrightarrow \xi\in\ma{\xi}{\xi_1}(X')\leftrightarrow
j_{\xi_1}(\xi)\in
j_{\xi_1}\circ\ma{\xi}{\xi_1}(X')=j_\xi(X')\leftrightarrow \xi\in j_
\xi(X')=X$. Thus, for $\alpha>\beta$, we have $\{\xi\in I\mid
\xi>\alpha\}\subseteq X$ if and only if $X\in U$.
\end{proof}
\begin{sublemma}
The $M$-ultrafilter $U$ is countably complete.
\end{sublemma}
\begin{proof}
Fix $\la A_n\mid n\in\omega\ra$ a sequence of elements of $U$. We
need to show that $\cap_{n\in\omega}A_n\neq\emptyset$. For each
$A_n$, there exists $\gamma_n\in I$ such that $X_n=\{\xi\in I\mid
\xi>\gamma_n\}\subseteq A_n$. Thus,
$\cap_{n\in\omega}X_n\subseteq\cap_{n\in\omega}A_n$ and clearly
$\cap_{n\in\omega}X_n$ has size $\kappa$.
\end{proof}
It remains to show that $A\in M$.
\begin{sublemma}
The set $A^*\upharpoonright \kappa=A$, and hence $A\in M$.
\end{sublemma}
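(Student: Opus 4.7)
The plan is to compute $A^*\upharpoonright\kappa$ explicitly using the concrete germs $t_\alpha$ already constructed, and then to deduce $A\in\ahh{\kappa}$ from the fact that the structure $\la\ahh{\kappa},A^*\ra$ satisfies ${\rm ZFC}^-$ in the language extended by the unary predicate. The key observation, present throughout the construction but so far used implicitly, is that each $\ma{\gamma}{\delta}$ is elementary in the language that includes $A$, since the Skolem terms defining these maps are Skolem terms for $\sa=\lset{\kappa}$ in that very language. Consequently $\la B,E,A',V\ra$ is a genuine direct limit of structures in the expanded language.

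First I would fix $\alpha<\kappa$ and let $\gamma\in I$ be least above $\alpha$. Recall from the earlier sublemma establishing $\kappa\in\ahh{\kappa}$ the germ $t_\alpha$ with domain $\{\xi\in I\mid\xi\geq\gamma\}$ and constant value $\alpha$; it collapses to $\alpha$. Because the predicate is preserved by the transition maps, we have $t_\alpha\in A'$ iff $t_\alpha(\xi)$ lies in the interpretation of $A$ in $\ahh{\xi}$ for some (equivalently, all sufficiently large) $\xi$ in the domain of $t_\alpha$. Since $\ahh{\xi}\subseteq\her{\xi}^{\sa}$, the interpretation of $A$ in $\ahh{\xi}$ is just $A\cap\xi$; and since $t_\alpha(\xi)=\alpha<\xi$, this condition reduces to $\alpha\in A$. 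Passing through the Mostowski collapse, $t_\alpha\mapsto\alpha$ and $A'\mapsto A^*$, so $\alpha\in A^*$ iff $\alpha\in A$. This yields $A^*\upharpoonright\kappa=A$.

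To finish, I would argue that $A$ is actually an element of $\ahh{\kappa}$, not merely a definable class. The earlier sublemma showed $\la\ah{\gamma}{n},A\ra\models{\rm ZFC}^-$ for each $\gamma$ and $n$, and since $\ahh{\gamma}$ is the union of an elementary chain of such structures we obtain $\la\ahh{\gamma},A\ra\models{\rm ZFC}^-$. This property is preserved by the direct limit (and by the Mostowski collapse), so $\la\ahh{\kappa},A^*\ra\models{\rm ZFC}^-$ in the language with the unary predicate. Since $\kappa\in\ahh{\kappa}$, applying the Separation schema in this expanded language to the formula ``$x\in\kappa\wedge A^*(x)$'' produces $\{x\in\kappa\mid A^*(x)\}=A^*\upharpoonright\kappa=A$ as a set in $\ahh{\kappa}$.

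The only genuine point of care is the very first one: ensuring that the predicate $A$ travels correctly through the directed system and the collapse. This is where one must invoke the fact that the $\ma{\gamma}{\delta}$ are defined from Skolem terms of $\sa$ in the language including $A$, and the fact that the ordinals of each $\ahh{\gamma}$ are precisely those below $\gamma$, so that the interpretation of $A$ in $\ahh{\gamma}$ agrees with the real $A$ on the common ordinal portion. Once this bookkeeping is in place, the conclusion is immediate and completes the verification that $\ahh{\kappa}$ is a weak $\kappa$-model containing $A$, equipped with the countably closed weakly amenable $\ahh{\kappa}$-ultrafilter $U_\kappa$ on $\kappa$, as required for the Ramsey embedding property.
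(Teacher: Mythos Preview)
Your argument is correct and is essentially the paper's proof rephrased: where the paper invokes $j_\gamma(\alpha)=\alpha$ and elementarity of $j_\gamma$ in the language with the predicate $A$, you work directly with the thread $t_\alpha$ and the direct-limit predicate $A'$, which amounts to the same computation. You are more explicit than the paper about the Separation step yielding $A\in\ahh{\kappa}$ (the paper leaves this as a bare ``hence''); your aside that the ordinals of $\ahh{\gamma}$ are exactly those below $\gamma$ is not quite right---$\ahh{\gamma}\subseteq H_{\gamma^+}^{\sa}$ may contain ordinals above $\gamma$---but this does not affect the argument since you only test membership of $\alpha<\gamma$.
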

\begin{proof}
Fix $\alpha\in A$ and let $\gamma\in I$ such that $\gamma>\alpha$,
then $\ahh{\gamma}\models \alpha\in A$. It follows that $\la
M,\in,A^*\ra\models j_\gamma(\alpha)\in A^*$, but
$j_\gamma(\alpha)=\alpha$, and so $\alpha\in A^*$. Thus, $A\subseteq
A^*$. Now fix $\alpha\in A^*\upharpoonright\kappa$ and let
$\gamma\in I$ such that $\gamma>\alpha$, then
$j_\gamma(\alpha)=\alpha$, and so $j_\gamma(\alpha)\in A^*$. It
follows that $\alpha\in A$. Thus, $A^*\upharpoonright\kappa\subseteq
A$. We conclude that $A=A^*\upharpoonright\kappa$.
\end{proof}
To summarize, we have shown that $M$ is a weak $\kappa$-model since
it is a transitive model of $\rm{ZFC}^-$ of size $\kappa$ containing
$\kappa$ as an element. We have further shown that $A$ is an element
of $M$ and $U$ is a countably complete weakly amenable
$M$-ultrafilter.
\end{proof}
We conclude this section with some basic facts about finite products
and iterations of weakly amenable countably complete
$M$-ultrafilters.
\begin{lemma}\label{le:iterate}
Suppose $M$ is a weak $\kappa$-model, $U$ is a $1$-good
$M$-ultrafilter on $\kappa$, and $j:M\to N$ is the ultrapower by
$U$. Then  $j_{U}(U)=\{A\subseteq j_{U}(\kappa)\mid A=[f]_{U}\text{
and }\{\alpha\in\kappa\mid f(\alpha)\in U\}\in U\}$ is a weakly
amenable $N$-ultrafilter on $j_{U}(\kappa)$ containing $j''U$ as a
subset.
\end{lemma}
For proof, see \cite{kanamori:higher} (Ch. 4, Sec. 19). Lemma
\ref{le:iterate} is essentially saying that we can take the
ultrapower of the structure $\la M,\in, U\ra$ by the $M$-ultrafilter
$U$ and the \L o\'{s} Theorem still goes through due to the weak
amenability of $U$. Previously, we only took ultrapowers of the
structures $\la M,\in\ra$. The \L o\'{s} Theorem there relied on the
fact that $M\models {\rm ZFC^-}$, but the structure $\la M,\in,U\ra$
need not satisfy any substantial fragment of ZFC. Thus, the
additional assumption of weak amenablity is precisely what is
required to carry out the argument. The ultrafilter $j_{U}(U)$ is
simply the relation corresponding to $U$ in the ultrapower.

The next lemma is an adaptation to the case of weak $\kappa$-models
of the standard fact from iterating ultrapowers.
\begin{lemma}\label{le:diagram}
Suppose $M$ is a weak $\kappa$-model, $U$ is a $1$-good
$M$-ultrafilter on $\kappa$, and $j_U:M\to M/U$ is the ultrapower by
$U$. Suppose further that
\begin{displaymath}
j_{U^n}:M\to M/U^n\text{ \emph{and} }h_{U^n}:M/U\to (M/U)/U^n
\end{displaymath}
are the well-founded ultrapowers by $U^n$. Then the ultrapower
\begin{displaymath}
j_{j_{U^n}(U)}:M/U^n\to (M/U^n)/j_{U^n}(U)\text{ \emph{(by}
}j_{U^n}(U))
\end{displaymath}
and the ultrapower
\begin{displaymath}
j_{U^{n+1}}:M\to M/U^{n+1}\text{ \emph{(by} }U^{n+1})
\end{displaymath}
are also well-founded.
Moreover,
\begin{displaymath}
 (M/U^n)/j_{U^n}(U)=(M/U)/U^n=M/U^{n+1},
\end{displaymath}
  and the
following diagram commutes:
\begin{diagram}
M & &\rTo^{j_U}& &M/U\\
 & \rdTo(4,4)^{j_{U^{n+1}}}& & &\\
\dTo^{j_{U^n}}& & & &\dTo_{h_{U^n}}\\
 & & & & \\
M/U^n & &\rTo_{j_{j_{U^n}(U)}}& &(M/U^n)/j_{U^n}(U)
\end{diagram}
\end{lemma}
\begin{proof}
The idea is to define obvious isomorphisms between
$(M/U^n)/j_{U^n}(U)$ and $M/U^{n+1}$ and between $(M/U)/U^n$ and
$M/U^{n+1}$.
\end{proof}
\begin{proposition}
If $M$ is a weak $\kappa$-model and $U$ is a weakly amenable
countably complete $M$-ultrafilter on $\kappa$, then the ultrapowers
of $M$ by $U^n$ are well-founded for all $n\in\omega$.
\end{proposition}
\begin{proof}
Use Lemma \ref{le:diagram} and argue by induction on $n$.
\end{proof}
The commutative diagram above gives an interesting reformulation of
the embeddings for Ramsey and strongly Ramsey cardinals.
\begin{proposition}
A cardinal $\kappa$ is Ramsey if and only if every
$A\subseteq\kappa$ is contained in a weak $\kappa$-model $M\models
{\rm ZFC}$ for which there exists a weakly amenable countably
complete $M$-ultrafilter with the ultrapower $j:M\to N$ having
$M\prec N$.
\end{proposition}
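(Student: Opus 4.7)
The plan is to treat the two directions separately: the backward direction follows easily from $M \prec N$ and the ZFC assumption, while the forward direction is obtained by adapting the indiscernibles construction from the backward direction of Theorem \ref{th:ram}.

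For the backward direction, suppose $M \models {\rm ZFC}$ is a weak $\kappa$-model containing $A$, with $j: M \to N$ the ultrapower by a countably closed $M$-ultrafilter $U$ satisfying $M \prec N$. Since $M$ satisfies the Powerset axiom, the set $P = \Power(\kappa)^M$ is an element of $M$ and $M \models ``P = \Power(\kappa)"$. By $M \prec N$, the same holds in $N$, so $\Power(\kappa)^N = P = \Power(\kappa)^M$. Hence $j$ is $\kappa$-powerset preserving, and by Proposition \ref{p:wa} the ultrafilter $U$ is weakly amenable. Combined with countable closure this yields the Ramsey embedding property via its reformulation in terms of countably closed weakly amenable $M$-ultrafilters, so by Theorem \ref{th:ram}, $\kappa$ is Ramsey.

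For the forward direction, given $\kappa$ Ramsey and $A \subseteq \kappa$, I adapt the construction from the backward direction of Theorem \ref{th:ram}. With $\sa = \lset{\kappa}$ (a ZFC model since $\kappa$ is inaccessible in $L[A]$) and $I \subseteq \kappa$ a set of good indiscernibles of size $\kappa$ from Lemma \ref{le:indiscernibles}, I use the Skolem hulls $\ahat{\gamma}{n}$ in $\sa$ directly, without intersecting with $\her{\gamma}^\sa$ as was done to produce $\ah{\gamma}{n}$ in the original proof. Each $\tilde{\mathcal A}_\gamma := \bigcup_n \ahat{\gamma}{n}$ is then an elementary substructure of $\sa$ and hence a ZFC model. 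The transitive collapse of the direct limit under the analogous maps (sending $h(\arr{b},\gamma,\arr{\gamma})$ to $h(\arr{b},\delta,\arr{\delta})$) gives a weak $\kappa$-model $M \models {\rm ZFC}$ containing $A$ and $\kappa$, and an analogous construction yields a countably closed weakly amenable $M$-ultrafilter $U$ on $\kappa$.

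The main obstacle is verifying $M \prec N$ for $N = \text{Ult}(M, U)$. The idea is that $N$ may be identified with the direct limit of an extended indiscernible system that adjoins one further stage above all of $I$, so that $M$ sits inside $N$ as the subsystem indexed by $I$. Under this identification, each element of $M$ is the value of a Skolem term on parameters below $\kappa$ together with the collapsed images $\kappa = \kappa_0 < \kappa_1 < \cdots$ of the higher indiscernibles, and these images combined with their $j$-values form a coherent good-indiscernible sequence in $N$. The elementarity $M \prec N$ then follows from indiscernibility over bounded parameters, since formulas about elements of $M$ reduce to formulas about these Skolem terms. Carefully tracking this identification and indiscernibility transfer through the Mostowski collapses is the main technical hurdle.
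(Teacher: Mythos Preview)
Your backward direction is correct and matches what the paper implicitly uses: $M\prec N$ together with $M\models{\rm ZFC}$ forces $\Power(\kappa)^M=\Power(\kappa)^N$, so $j$ is $\kappa$-powerset preserving and the Ramsey embedding property follows.

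For the forward direction, however, you take a much harder road than the paper, and you do not finish it. The paper does \emph{not} revisit the indiscernibles construction at all. Instead it starts from the already-established Ramsey embedding property: take any weak $\kappa$-model $M$ containing $A$ and $V_\kappa$ with a countably closed weakly amenable $M$-ultrafilter $U$, and let $j:M\to N$ be the ultrapower. Now set $M'=V_{j(\kappa)}^N$; this is a transitive ZFC model because $V_\kappa\models{\rm ZFC}$. The two-step iteration diagram of Lemma~\ref{le:diagram} with $n=1$ gives two factorizations of $j_{U^2}$: through $j_{j(U)}\circ j$ and through $h_U\circ j$. Restricting to $M'$, the map $h_U\upharpoonright M'$ is precisely the ultrapower of $M'$ by $U$, while $j_{j(U)}$ has critical point $j(\kappa)$ and so is the identity on $M'$; hence $M'\prec K'$ where $K'=V_{h_U(j(\kappa))}^K$. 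Three lines, no indiscernibles.

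Your route---rebuilding the direct limit from the uncollapsed Skolem hulls $\ahat{\gamma}{n}$ and then arguing that the ultrapower of the collapsed limit corresponds to ``adjoining one further indiscernible stage''---is plausible in spirit but you have not carried it out, and the identification you sketch is genuinely delicate. The ultrapower of the direct limit by $U_\kappa$ is not obviously another direct limit of the same shape; you would need to show that every $[f]_{U_\kappa}$ is represented in the extended system and that the collapses line up. You yourself flag this as ``the main technical hurdle'' without resolving it. The paper's iteration-diagram argument bypasses all of this: once you have any Ramsey embedding with $V_\kappa\in M$, the desired $M'\prec N'$ drops out of the commutativity of the square.
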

The difference from the earlier embeddings is that now for every
$A\subseteq\kappa$, we have $A\in M$ where $M$ is a model of full
{\rm ZFC} and $j:M\to N$ is an embedding such that not only do $M$
and $N$ have the same subsets of $\kappa$, but actually $M\prec N$.
\begin{proof}
Fix $A\subseteq\kappa$ and choose a weak $\kappa$-model $M$
containing $A$ and $V_\kappa$ for which there exists a weakly
amenable countably complete $M$-ultrafilter $U$ on $\kappa$.
Let\break $j:M\to N$ be the ultrapower by $U$. The commutative
diagram from Lemma \ref{le:diagram} becomes the following for the
case $n=1$:
\begin{diagram}
M & &\rTo^j& &N=M/U\\
 & \rdTo(4,4)^{j_{U^2}}& & &\\
\dTo^j& & & &\dTo_{h_U}\\
 & & & & \\
N=M/U & &\rTo_{j_{j(U)}}& & K=N/j(U)
\end{diagram}
Let $M'=V_{j(\kappa)}^N$ and observe that it is a transitive model
of {\rm ZFC}. Let
$K'=V_{j_{j(U)}(j(\kappa))}^K=V_{h_U(j(\kappa))}^K$.  Since
$j(\kappa)$ is regular in $N$, the models $M'$ and $N$ have the same
functions from $\kappa$ to $M'$, and thus the map
$h_U\upharpoonright M':M'\to K'$ is the ultrapower embedding of $M'$
into $M'/U$. It remains to show that \hbox{$M'\prec K'$}, but this
follows easily from the $j_{j(U)}$ side of the commutative diagram
since \hbox{$j_{j(U)}\upharpoonright M':M'\to K'$} and $j_{j(U)}$ is
identity on $M'$.
\end{proof}
\begin{corollary}
A cardinal $\kappa$ is strongly Ramsey if and only if every
$A\subseteq\kappa$ is contained in a $\kappa$-model $M\models{\rm
ZFC}$ for which there exists an elementary embedding $j:M\to N$ with
critical point $\kappa$ such that $M\prec N$.
\end{corollary}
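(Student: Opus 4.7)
The plan is to follow the proof template of the preceding proposition, with one additional observation needed to upgrade from weak $\kappa$-models to $\kappa$-models. For the nontrivial forward direction, fix $A\subseteq\kappa$ and, using the strong Ramsey embedding property together with Proposition \ref{prop:wlog}, obtain a $\kappa$-model $M$ containing $A$ and $V_\kappa$ with an ultrapower $j:M\to N$ by a weakly amenable $M$-ultrafilter $U$ on $\kappa$, where $N^{<\kappa}\subseteq N$. A preliminary observation is that $U$ is automatically countably closed in this $\kappa$-model setting: any sequence $\la A_n\mid n\in\omega\ra$ of elements of $U$ lies in $M$ because $M^{<\kappa}\subseteq M$, and applying $j$ yields $\kappa\in\bigcap_n j(A_n)$ in $N$, so $\bigcap_n A_n\neq\emptyset$ by elementarity.

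Next, invoke Lemma \ref{le:diagram} at $n=1$ to obtain the commutative diagram, with the ultrapower $h_U:N\to K$ on the right and $j_{j(U)}:N\to K$ on the bottom, the latter having critical point $j(\kappa)$. Set $M'=V_{j(\kappa)}^N$ and $K'=V_{h_U(j(\kappa))}^K$. Arguing exactly as in the previous proposition, $M'\models\mathrm{ZFC}$ because $V_\kappa\models\mathrm{ZFC}$ (as $\kappa$ is inaccessible, being Ramsey by Corollary \ref{cor:strongram}) and $j$ is elementary; $A\in M'$ and $\kappa+1\subseteq M'$; the map $h=h_U\upharpoonright M':M'\to K'$ is elementary with critical point $\kappa$; and $M'\prec K'$ follows since $j_{j(U)}$ has critical point $j(\kappa)$ and thus restricts to the identity on $M'$, with $j_{j(U)}(j(\kappa))=h_U(j(\kappa))$ by commutativity.

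The main new point to check, and the main obstacle to anticipate, is verifying that $M'$ is itself a $\kappa$-model. Given a sequence in $M'$ of length less than $\kappa$, it lies in $N$ by $N^{<\kappa}\subseteq N$, and its rank is bounded below $j(\kappa)$ because $j(\kappa)$ has cofinality at least $\kappa$ in $N$ (being inaccessible there), so the sequence lies in $V_{j(\kappa)}^N=M'$. For the converse, if $M$ is a $\kappa$-model containing $A$ and $j:M\to N$ is elementary with $M\prec N$, then applying the elementarity of the inclusion $M\prec N$ to the parameter $\Power(\kappa)^M\in M$ yields $\Power(\kappa)^M=\Power(\kappa)^N$, so $j$ is $\kappa$-powerset preserving and $M$ witnesses the strong Ramsey embedding property for $A$.
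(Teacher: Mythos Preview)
Your proof is correct and follows essentially the same approach as the paper: reuse the previous proposition's construction of $M'=V_{j(\kappa)}^N$ and verify the one additional point, namely that $M'$ is closed under ${<}\kappa$-sequences, using $N^{<\kappa}\subseteq N$ (from Proposition~\ref{prop:wlog}) together with the inaccessibility of $j(\kappa)$ in $N$. You are in fact a bit more careful than the paper in two places: you explicitly note that $U$ is countably closed (needed to invoke the well-foundedness hypotheses in Lemma~\ref{le:diagram}), and you spell out the easy converse direction, which the paper leaves implicit.
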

\begin{proof}
Using the previous proof it suffices to verify that $M'$ is a
$\kappa$-model, that is, it is closed under $<\kappa$-sequences. By
Proposition \ref{prop:wlog}, we can assume, without loss of
generality, that $N$ is closed under $<\kappa$-sequences. Since $N$
thinks that $j(\kappa)$ is inaccessible, it follows that
$M'=V_{j(\kappa)}^N$ must be closed under $<\kappa$-sequences as
well.
\end{proof}
The same arguments cannot be carried out for weakly Ramsey cardinals
since the ultrapower by $U^2$ need not be well-founded in that case.
Thus, it is not clear whether the weakly Ramsey cardinals have a
similar characterization.
\begin{question}
If $\kappa$ is a weakly Ramsey cardinal, does there exist for every
$A\subseteq\kappa$, a weak $\kappa$-model $M\models{\rm ZFC}$ and an
elementary embedding  $j:M\to N$ with critical point $\kappa$ such
that $M\prec N$?
\end{question}
\section{$\alpha$-iterable Cardinals and $\alpha$-good
Ultrafilters}\label{sec:iterable} A key feature of $M$-ultrafilters
associated with Ramsey-like embeddings is weak amenability; meaning
that they have the potential to be iterated. Recall that
\hbox{0-good} $M$-ultrafilters are exactly the ones with
well-founded ultrapowers, and \hbox{1-good} $M$-ultrafilters are
weakly amenable and 0-good. Consider starting with a weak
$\kappa$-model $M_0$ for which there exists a \hbox{1-good}
$M_0$-ultrafilter $U_0$. Applying Lemma \ref{le:iterate}, take the
ultrapower of the structure $\la M_0,\in, U_0\ra$ by $U_0$ to obtain
the structure $\la M_1, \in,U_1\ra$ with $U_1$ a weakly amenable
$M_1$-ultrafilter. Next, if the result happens to be
\emph{well-founded}, take the ultrapower of $\la M_1, \in, U_1\ra$
by $U_1$ to obtain the structure $\la M_2,\in, U_2\ra$. We will call
such $U_0$ $2$-\emph{good} to indicate that we were able to do a
two-step iteration. If $\xi\leq\omega$ and we can continue iterating
for $\xi$-many steps by obtaining well-founded ultrapowers, we will
say that the $M$-ultrafilter $U_0$ is $\xi$-\emph{good}. Suppose
that $U_0$ is $\omega$-good. In this case, we have a directed system
of models $\la M_n,\in, U_n\ra$ with the corresponding ultrapower
embeddings. Take the direct limit of this system. If the direct
limit happens to be well-founded, collapse it to obtain $\la
M_\omega,\in, U_\omega\ra$, where $U_\omega$ is a weakly amenable
$M_\omega$-ultrafilter. We will call $U_0$ $\omega+1$-\emph{good} to
indicate that we were able to do an $\omega+1$-step iteration. We
can proceed in this fashion as long as the iterates are
well-founded.\footnote{See \cite{kanamori:higher} (Ch. 4, Sec. 19)
for details involved in this construction.}
\begin{definition}
Suppose $M$ is a weak $\kappa$-model. An $M$-ultrafilter on $\kappa$
is $\alpha$-\emph{good}, if the ultrapower construction can be
iterated $\alpha$-many steps.
\end{definition}
\begin{definition}
A cardinal $\kappa$ is $\alpha$-\emph{iterable} if every
$A\subseteq\kappa$ is contained in a weak $\kappa$-model $M$ for
which there exists an $\alpha$-good $M$-ultrafilter on $\kappa$.
\end{definition}
Weakly Ramsey cardinals are exactly the 1-iterable cardinals.
Gaifman showed in \cite{gaifman:ultrapowers} that if an
$M$-ultrafilter is $\omega_1$-good, then it is already $\alpha$-good
for every ordinal $\alpha$. Kunen showed in \cite{kunen:ultrapowers}
that if an $M$-ultrafilter is weakly amenable and countably
complete, then it is $\omega_1$-good. Thus, Ramsey cardinals are
$\omega_1$-iterable. Even though countable completeness is
sufficient for full iterability, it is not necessary. In fact,
Sharpe and Welch showed in \cite{welch:ramsey} that
$\omega_1$-iterable cardinals are strictly weaker than Ramsey
cardinals.
\begin{theorem}
An $\omega_1$-Erd\H{o}s is a limit of $\omega_1$-iterable cardinals.
\end{theorem}
In an upcoming paper with Welch \cite{gitman:welch}, we show:
\begin{theorem}\label{th:alphaiterable}
The $\alpha$-iterable cardinals forms a strict hierarchy for
$\alpha\leq\omega_1$. In particular, for $\alpha<\beta\leq\omega_1$,
a $\beta$-iterable cardinal is a limit of $\alpha$-iterable
cardinals.
\end{theorem}
\begin{theorem}
The $\alpha$-iterable cardinals are downwards absolute to $L$ for
$\alpha<\omega_1^L$.
\end{theorem}
\bibliographystyle{alpha}
\bibliography{gitmanbib,logicbib}
\end{document}